\def\firstpage{1}
\newcommand{\ds}{\displaystyle}
\begin{document}

\titlefigurecaption{{\large \bf \rm Progress in Fractional Differentiation
and Applications}\\ {\it\small An International Journal}}

\title{Enlarged Controllability and Optimal Control
of Sub-Diffusion Processes with Caputo Fractional Derivatives}

\author{Touria Karite{$^{1}$}, Ali Boutoulout{$^1$} and Delfim F. M. Torres{$^{2,*}$}}

\institute{TSI Team, MACS Laboratory, 
Department of Mathematics \& Computer Science, 
Institute of Sciences, Moulay Ismail University, Meknes, Morocco\and
Center for Research and Development in Mathematics and Applications (CIDMA),
Department of Mathematics, University of Aveiro, 3810-193 Aveiro, Portugal}


\titlerunning{Enlarged Controllability and Optimal Control ...}

\authorrunning{T. Karite et al.}

\mail{delfim@ua.pt}


\received{4 Nov 2018}
\revised{16 Nov 2019}
\accepted{22 Nov 2019}


\abstracttext{We investigate the exact enlarged controllability and optimal control
of a fractional diffusion equation in Caputo sense. 
This is done through a new definition of enlarged 
controllability that allows us to extend available contributions. Moreover, 
the problem is studied using two approaches: a reverse Hilbert uniqueness 
method, generalizing the approach introduced by Lions in 1988, 
and a penalization method, which allow us to characterize the minimum energy control.}

\keywords{Fractional calculus and diffusion,
Caputo derivatives and enlarged controllability,
RHUM approach and minimum energy,
fractional optimal control,
zone and pointwise actuators.\\[2mm]
\textbf{2010 Mathematics Subject Classification}.  Primary 26A33, 93B05; Secondary 49J20.}

\maketitle


\section{\; Introduction}

The calculus of fractional order began more than three centuries ago. 
It was first mentioned by the celebrated Leibniz, in a letter replying
to l'H\^{o}pital, addressing the question whether the derivative
remains valid for a non-integer order. The subject has been developed 
by several mathematicians such as Euler, Fourier, Liouville, Grunwald, Letnikov 
and Riemann, among many others, up to the present day where many authors 
study such kind of operators and propose new fractional derivatives 
\cite{Book:IP:1999,Book:Kilbas:et:al:2006,Book:Mal:To:2012,%
Book:Malinowska:et:al:2015,Book:Baleanu:et:al:2017}. 
Over the last decades, fractional calculus has gained more and more attention 
because of its applications in various fields of science, such as physics, engineering, 
economics, and biology \cite{Paper:Ich:Nag:Koj:71,Paper:Machadoa:et:al:2006,%
Paper:Battaglia:et:al:2001,Paper:BAG:Tor:1983,Paper:Cat:Sor:2007,MyID:387}.

In control theory, several authors have been interested in fractional calculus 
since the sixties of last century. The first contributions generalize classical 
analytical methods and concepts for fractional order systems, such as the transfer 
function, frequency response, pole and zero analysis, and so on 
\cite{Book:Oustaloup:1983,Paper:Axt:Bis:1990}. Nowadays, fractional calculus 
is used in the field of automatic control to obtain more accurate models, 
to develop new control strategies and to improve the characteristics 
of control systems \cite{MyID:264,Orti11}.

In recent years, fractional order sub-diffusion systems have
been attracting the attention of many researchers because they have 
shown to have advantages over traditional integer order systems, 
as they can characterize more accurately anomalous diffusion processes 
in various real-world complex systems \cite{Paper:Met:Klaf:2000,%
Paper:Weiss:et:al:2004,Paper:Chen:et:al:2010,Paper:Luchko:2010}. 
In particular, fractional anomalous diffusion has been used to describe different physical 
scenarios, most prominently within crowded systems, for example protein diffusion 
within cells or diffusion through porous media. Time-fractional sub-diffusion 
has also been proposed as a measure of macromolecular crowding in the cytoplasm \cite{SP}.

It is worth to mention that the controllability of a fractional 
order sub-diffusion system can be reformulated as an infinite
dimensional control problem. Moreover, in case of diffusion systems, 
not all states can in general be reached
\cite{Book:Jai:Prit:1988,Paper:Ge:et:al:Grad:2016,Paper:Ge:et:al:2016}.
Because mathematical models of real systems are obtained from measures 
or from approximation techniques, which are very often affected by perturbations, 
if the solutions for such systems are only approximately known, then 
control problems subject to output fractional constraints are more realistic 
and more adapted for system analysis than the classical 
ones \cite{MyID:387,MyID:323}.

Many problems in modern science are commonly attacked with the help of 
optimization theory. Optimal control can be regarded as a branch of 
Mathematics whose goal is to improve the state variables of a control system
in order to maximize a benefit or minimize a given cost. This is applicable 
to practical situations, where state variables can be temperature, 
a velocity field, a measure of information, etc. This is the main reason 
why optimal control is an attractive research area for many scientists in 
various disciplines. Along the years, efficient optimization and optimal 
control methods have been developed in order to compute the solution 
of such fractional optimal control problems 
\cite{MyID:387,Paper:Tlacuahuac:Biegler:2014,Paper:Zhou:Casas:2014}.

Here we deal with the controllability problem of Caputo fractional 
diffusion equations when in presence of constraints on the state variables. 
This is related with the notion of enlarged controllability, which 
was first investigated by Lions in 1988 for hyperbolic systems 
\cite{Book:Lions:1988} and later developed for linear and semilinear parabolic systems
\cite{Paper:Zerrik:Ghafrani:2002,Paper:Karite:Boutoulout:2017,Paper:Karite:Boutoulout:2018,MyID:391}.
Moreover, we create a bridge with optimal control of systems described by 
fractional order differential equations. For that we prove enlarged 
controllability by means of a reverse HUM (Hilbert uniqueness method)
and make use of a penalization method, which allow us to characterize 
the minimum energy control. We consider the Caputo fractional derivative 
because it allows traditional initial and boundary conditions to be 
included in the formulation of the problem \cite{Nazare:Shahmorad}.
For related results with the Caputo--Fabrizio operators, we refer the reader
to the recent paper \cite{Karite2020}.

The remainder of the paper is organized as follows. 
Definitions and preliminaries on fractional calculus 
are given in Section~\ref{Sec:2}. In Section~\ref{Sec:3}, 
we characterize the exact enlarged controllability 
of the system. Our main results on the exact enlarged controllability 
are then proved in Section~\ref{Sec:4}, in two different cases: 
for zone and pointwise actuators. In Section~\ref{Sec:5}, an optimization 
problem for a system of fractional order is solved using a penalization method. 
We end up with Sections~\ref{Sec:examples} and \ref{Sec:6}, 
providing, respectively, some examples in both cases of actuators; and
conclusions of our work, pointing out some open questions 
deserving further investigations.


\section{\; Preliminaries}
\label{Sec:2}

Let $\Omega\subset\mathbb{R}^{n}$ be bounded with a smooth boundary $\partial{\Omega}$. 
For $T>0$, denote $Q=\Omega\times[0,T]$ and $\Sigma=\partial\Omega\times[0,T]$. 
We consider the following abstract fractional sub-diffusion 
system of order $\alpha\in(0,1)$:
\begin{eqnarray}
\label{sys1:eq1}
\begin{cases}
\ds {{}^{C}_{}D}^{\alpha}_{}y(t) = \mathcal{A}y(t)  + \mathcal{B}u(t), \quad & t\in [0,T],\\
y(0)=y_{_{0}}  \quad &\mbox{in}\quad D(\mathcal{A}),
\end{cases}
\end{eqnarray}
where ${}^{C}_{}D^{\alpha}_{}$ denotes the Caputo fractional order derivative 
(for details on Caputo fractional derivatives, see, e.g., \cite{Book:IP:1999,Book:AAK:HMS:JJT:2006}). 
The second order operator $\mathcal{A}$ is linear and with dense domain, 
such that the coefficients do not depend on $t$ and generates a $C_{0}$-semi-group 
$(S(t))_{_{t\geq 0}}$ on the Hilbert space $L^{2}(\Omega)$. We refer the reader 
to Engel and Nagel \cite{Book:KJE:RN:2006} and Renardy and Rogers \cite{Book:MR:RCR:2004} 
for properties on operator $\mathcal{A}$. In the sequel, we let
$\mathcal{D}(A)$ be the domain of the operator $\mathcal{A}$; 
$y\in L^{2}(0,T; L^{2}(\Omega))$ and $u\in U = L^{2}(0,T; \mathbb{R}^{m})$, 
where $m$ is the number of actuators. The initial datum $y_{_{0}}$ 
is in $L^{2}(\Omega)$, $\mathcal{B}: \mathbb{R}^{m}\longrightarrow L^{2}(\Omega)$ 
is the control operator, which is linear, possibly unbounded, 
and depending on the number and structure of actuators.

Several definitions and preliminary 
results will be needed to study system \eqref{sys1:eq1}. We begin
by recalling the most important function used in fractional calculus,
\emph{Euler's gamma function}, which is defined as
\begin{equation*}
\Gamma(n) = \ds\int_{0}^{\infty}t^{n-1}e^{-t}dt.
\end{equation*}
This function is a generalization of the factorial: 
if $n \in \mathbb{N}$, then $\Gamma(n) = (n-1)!$.

\begin{definition}[See, e.g., \cite{Book:IP:1999}]
The left-sided Caputo fractional derivative of order $\alpha > 0$ 
of a function $z$ is given by
\begin{equation}
\label{Cap:Der:eq2}
{}^{C}_{0}D^{\alpha}_{t}z(t) 
= 
\begin{cases}
\ds\frac{1}{\Gamma(n-\alpha)}\int_{0}^{t}(t-s)^{n-\alpha-1}
\frac{d^{n}}{d s^{n}}z(s)ds, 
\quad n-1<\alpha<n,\quad t\geq 0,\quad n\in\mathbb{N},\\[0.3cm]
\ds\frac{d^{n}z(t)}{d t^{n}},
\quad \alpha=n\in\mathbb{N}.
\end{cases}
\end{equation}
\end{definition}
The right-sided is pointwise defined. The Caputo fractional derivative 
is a sort of regulation in the time origin for the 
Riemann--Liouville fractional derivative.

\begin{definition}[See, e.g., \cite{Paper:OPA:2004,Paper:GMB:2016,Paper:GMM:2011}]
Let $z:\mathbb{R}^{+} \rightarrow \mathbb{R}$ be a continuous function 
on $\mathbb{R}^{+}$ and $\alpha>0$. Then the expressions
\begin{equation}
\label{RL:Int:eq3}
{}_{0}I^{\alpha}_{t}z(t) 
= \ds\frac{1}{\Gamma(\alpha)}
\int_{0}^{t}(t-s)^{\alpha-1}z(s)ds,\quad t>0,
\end{equation}
and 
\begin{equation}
\label{RL:Int:eq4}
{}_{t}I^{\alpha}_{T}z(t) = \ds\frac{1}{\Gamma(\alpha)}
\int_{t}^{T}(s-t)^{\alpha-1}z(s)ds,\quad t<T,
\end{equation}
are, respectively, called the left-sided and right-sided 
Riemann--Liouville integrals of order $\alpha$.
\end{definition}

\begin{definition}[See, e.g., \cite{Paper:OPA:2004,Paper:GMB:2016,Paper:GMM:2011}]
Let $z:\mathbb{R}^{+} \rightarrow \mathbb{R}$. 
The left-sided and right-sided Riemann--Liouville fractional 
derivatives of order $\alpha$ are defined by
\begin{equation}
\label{RL:Der:eq4}
{}_{0}D^{\alpha}_{t}z(t) = \ds\frac{1}{\Gamma(n-\alpha)}
\frac{d^{n}}{dt^{n}}\int_{0}^{t}(t-s)^{n-\alpha-1}z(s)ds, \quad t>0,
\end{equation}
and
\begin{equation}
\label{RL:Der:eq5}
{}_{t}D^{\alpha}_{T}z(t) = \ds\frac{1}{\Gamma(n-\alpha)}\left( 
-\frac{d}{dt}\right)^{n} \int_{t}^{T}(s-t)^{n-\alpha-1}z(s)ds, \quad t<T,
\end{equation}
where $\alpha\in(n-1,n)$, $n\in\mathbb{N}$.
\end{definition}
We will always consider solutions of \eqref{sys1:eq1} in the weak sense. 
We denote that solution by $y(x,t;u)$ and, when there will be no possible ambiguities, 
we will also use the short notation $y_{_{u}}(t)$ or $y(u)$. Hence, we denote 
by $y_{_{u}}(T)$ the mild solution of system \eqref{sys1:eq1} at the final time $T$. 

\begin{definition}[See \cite{Paper:FM:et:al:2007}]
For $t\in[0,T]$ and any given $u\in U$, a function 
$y\in L^{2}(0,T;L^{2}(\Omega))$ is said to be a mild solution 
of system \eqref{sys1:eq1} if it satisfies
\begin{equation}
\label{MildSol:eq5}
y_{_{u}}(t) = \mathcal{R}_{_{\alpha}}(t)y_{_{0}} 
+ \ds\int_{0}^{t}(t-s)^{\alpha-1}K_{_{\alpha}}(t-s)\mathcal{B}u(s)ds,
\end{equation}
where 
\begin{equation}
\label{eq6}
\mathcal{R}_{_{\alpha}}(t) = \ds\int_{0}^{\infty}
\phi_{_{\alpha}}(\theta)S(t^{\alpha}\theta)d\theta
\end{equation}
and 
\begin{equation}\label{eq7}
K_{_{\alpha}}(t) = \alpha\ds\int_{0}^{\infty}\theta
\phi_{_{\alpha}}(\theta)S(t^{\alpha}\theta)d\theta
\end{equation}
with $\phi_{_{\alpha}}(\theta)$ given by 
\begin{equation*}
\phi_{_{\alpha}}(\theta) = \ds\frac{1}{\alpha}\theta^{-1-1/\alpha}
\psi_{_{\alpha}}(\theta^{-1/\alpha}),
\end{equation*}
where $\psi_{_{\alpha}}$ is the following probability density function:
\begin{equation}
\label{eq:pdf}
\psi_{_{\alpha}}(\theta) = \ds\frac{1}{\pi} 
\sum_{n=1}^{\infty}(-1)^{n-1}\theta^{-\alpha n-1}
\frac{\Gamma(n\alpha+1)}{n!}\sin(n\pi\alpha),
\quad\theta\in(0,\infty).
\end{equation}
\end{definition}

\begin{remark}
The probability density function \eqref{eq:pdf} satisfies the following properties:
\begin{equation}\label{density:eq8}
\ds\int_{0}^{\infty}e^{-\lambda\theta}\psi_{_{\alpha}}(\theta)d\theta 
= e^{-\lambda^{\alpha}},\qquad \int_{0}^{\infty}\psi_{_{\alpha}}(\theta)d\theta 
= 1,\quad \alpha\in(0,1),
\end{equation}
and
\begin{equation}\label{gamma:func}
\ds\int_{0}^{\infty}\theta^{\nu}\phi_{_{\alpha}}(\theta)d\theta 
= \ds\frac{\Gamma(1+\nu)}{\Gamma(1+\alpha\nu)},\quad \nu\geq 0.
\end{equation}
\end{remark}

Let $H : L^{2}(0,T; \mathbb{R}^{m})\rightarrow L^{2}(\Omega)$ be 
defined as
\begin{equation}
\label{H:eq10}
Hu = \ds\int_{0}^{T}(T-s)^{\alpha-1}K_{_{\alpha}}(T-s)\mathcal{B}u(s)ds,
\quad\forall u\in L^{2}(0,T; \mathbb{R}^{m}),
\end{equation}
where $m$ is the number of actuators. We assume that $(S^{*}(t))_{_{t\geq 0}}$ 
is a strongly continuous semi-group generated by $A^{*}$ on the state space 
$L^{2}(\Omega)$. For $v\in L^{2}(\Omega)$, one has
\begin{equation}
\label{H*:eq11}
\begin{array}{lll}
\left\langle Hu , v\right\rangle 
&= \left\langle\displaystyle\int_{0}^{T}(T-s)^{\alpha-1}
K_{\alpha}(T-s)\mathcal{B}u(s)ds, v\right\rangle_{L^{2}(\Omega)}\\
& = \displaystyle\int_{0}^{T} \langle (T-s)^{\alpha-1}
K_{\alpha}(T-s)\mathcal{B}u(s),v\rangle_{L^{2}(\Omega)} ds\\
& = \displaystyle\int_{0}^{T} \left\langle u(s),
\mathcal{B}^{*}(T-s)^{\alpha-1}
K_{\alpha}^{*}(T-s)v\right\rangle_{L^{2}(0,T; \mathbb{R}^{m})} ds\\
& = \langle u, H^{*}v \rangle,
\end{array}
\end{equation}
where by $\langle\cdot,\cdot\rangle$ we denote the duality pairing 
of space $L^{2}(\Omega)$, $\mathcal{B}^{*}$ 
is the adjoint operator of $\mathcal{B}$, and
\begin{equation*}
K_{_{\alpha}}^{^{*}}(t) = \alpha\ds\int_{0}^{\infty}\theta
\phi_{_{\alpha}}(\theta)S^{*}(t^{\alpha}\theta)d\theta.
\end{equation*}
In order to prove our main results, the following lemmas are needed.

\begin{lemma}[See \cite{Book:KM:2009}]
\label{lemma1}
Let the reflection operator $\mathcal{Q}$ 
on the interval $[0,T]$ be defined as follows:
\begin{equation*}
\mathcal{Q}h(t):=h(T-t)
\end{equation*}
for some function $h$ that is
differentiable and integrable. 
Then the following relations hold:
\begin{equation*}
\mathcal{Q}{}_{0}I^{\alpha}_{t}h(t) 
= {}_{t}I^{\alpha}_{T}\mathcal{Q}h(t), 
\qquad \mathcal{Q}{}_{0}D^{\alpha}_{t}h(t) 
= {}_{t}D^{\alpha}_{T}\mathcal{Q}h(t)
\end{equation*}
and
\begin{equation*}
{}_{0}I^{\alpha}_{t}\mathcal{Q}h(t) 
= \mathcal{Q}{}_{t}I^{\alpha}_{T}h(t), 
\qquad {}_{0}D^{\alpha}_{t}\mathcal{Q}h(t) 
= \mathcal{Q}{}_{t}D^{\alpha}_{T}h(t).
\end{equation*}
\end{lemma}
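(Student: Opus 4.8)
The plan is to verify each of the four identities by a direct change of variables in the defining integral \eqref{RL:Int:eq3}--\eqref{RL:Der:eq5}, using the substitution that implements the reflection $\mathcal{Q}$. First I would treat the integral identities, since the derivative identities follow from them by differentiating under the reflection. To prove $\mathcal{Q}\,{}_{0}I^{\alpha}_{t}h(t)={}_{t}I^{\alpha}_{T}\mathcal{Q}h(t)$, start from the left-hand side: by definition $\mathcal{Q}\,{}_{0}I^{\alpha}_{t}h(t)=\bigl({}_{0}I^{\alpha}_{\tau}h\bigr)\big|_{\tau=T-t} =\frac{1}{\Gamma(\alpha)}\int_{0}^{T-t}(T-t-s)^{\alpha-1}h(s)\,ds$. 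Now substitute $s=T-\sigma$, so $ds=-d\sigma$, the limits $s=0,\,s=T-t$ become $\sigma=T,\,\sigma=t$, and $T-t-s=T-t-(T-\sigma)=\sigma-t$. This turns the integral into $\frac{1}{\Gamma(\alpha)}\int_{t}^{T}(\sigma-t)^{\alpha-1}h(T-\sigma)\,d\sigma=\frac{1}{\Gamma(\alpha)}\int_{t}^{T}(\sigma-t)^{\alpha-1}(\mathcal{Q}h)(\sigma)\,d\sigma={}_{t}I^{\alpha}_{T}\mathcal{Q}h(t)$, which is exactly \eqref{RL:Int:eq4}. The companion identity $\mathcal{Q}\,{}_{t}I^{\alpha}_{T}h(t)={}_{0}I^{\alpha}_{t}\mathcal{Q}h(t)$ is obtained by the same substitution applied to \eqref{RL:Int:eq4}, or simply by noting $\mathcal{Q}^{2}=\mathrm{Id}$ and applying $\mathcal{Q}$ to both sides of the first identity, replacing $h$ by $\mathcal{Q}h$.

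For the derivative identities, recall from \eqref{RL:Der:eq4}--\eqref{RL:Der:eq5} that ${}_{0}D^{\alpha}_{t}=\frac{d^{n}}{dt^{n}}\circ{}_{0}I^{n-\alpha}_{t}$ and ${}_{t}D^{\alpha}_{T}=\bigl(-\frac{d}{dt}\bigr)^{n}\circ{}_{t}I^{n-\alpha}_{T}$, with $\alpha\in(n-1,n)$. The key observation is the chain-rule-type relation $\frac{d}{dt}\mathcal{Q}g(t)=\frac{d}{dt}g(T-t)=-(\mathcal{Q}g')(t)$, hence $\frac{d^{n}}{dt^{n}}\mathcal{Q}=(-1)^{n}\mathcal{Q}\frac{d^{n}}{dt^{n}}$, equivalently $\mathcal{Q}\frac{d^{n}}{dt^{n}}=(-1)^{n}\frac{d^{n}}{dt^{n}}\mathcal{Q}=\bigl(-\frac{d}{dt}\bigr)^{n}\mathcal{Q}$. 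Then, using the already-established integral identity with $n-\alpha$ in place of $\alpha$,
\begin{equation*}
\mathcal{Q}\,{}_{0}D^{\alpha}_{t}h
=\mathcal{Q}\,\frac{d^{n}}{dt^{n}}\,{}_{0}I^{n-\alpha}_{t}h
=\Bigl(-\frac{d}{dt}\Bigr)^{n}\mathcal{Q}\,{}_{0}I^{n-\alpha}_{t}h
=\Bigl(-\frac{d}{dt}\Bigr)^{n}{}_{t}I^{n-\alpha}_{T}\mathcal{Q}h
={}_{t}D^{\alpha}_{T}\mathcal{Q}h,
\end{equation*}
which is the second stated relation; the fourth, ${}_{0}D^{\alpha}_{t}\mathcal{Q}h={}_{\,}\mathcal{Q}\,{}_{t}D^{\alpha}_{T}h$, follows symmetrically by writing ${}_{0}D^{\alpha}_{t}\mathcal{Q}=\frac{d^{n}}{dt^{n}}{}_{0}I^{n-\alpha}_{t}\mathcal{Q}=\frac{d^{n}}{dt^{n}}\mathcal{Q}\,{}_{t}I^{n-\alpha}_{T}=(-1)^{n}\mathcal{Q}\frac{d^{n}}{dt^{n}}{}_{t}I^{n-\alpha}_{T}$ and matching signs with the $\bigl(-\frac{d}{dt}\bigr)^{n}$ in \eqref{RL:Der:eq5}, or again by the involution trick $\mathcal{Q}^2=\mathrm{Id}$.

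The argument is essentially a bookkeeping exercise, so I do not expect a genuine obstacle; the one place to be careful is the sign and order of operations in the derivative case — making sure the $(-1)^{n}$ generated by commuting $\mathcal{Q}$ past $\frac{d^{n}}{dt^{n}}$ is precisely absorbed by the $\bigl(-\frac{d}{dt}\bigr)^{n}$ appearing in the right-sided Riemann--Liouville derivative \eqref{RL:Der:eq5}, and that the limits of integration transform consistently. Regularity is not an issue here because the statement already assumes $h$ is differentiable and integrable, which is exactly what is needed to differentiate under the integral sign and to apply the Riemann--Liouville definitions; all manipulations are then justified by Fubini/Leibniz on the relevant interval.
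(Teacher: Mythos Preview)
Your argument is correct and is exactly the standard change-of-variables verification one expects. Note, however, that the paper does not actually supply a proof of Lemma~\ref{lemma1}: it is stated with a citation to \cite{Book:KM:2009} and used as a black box, so there is no ``paper's own proof'' to compare against. Your write-up would serve as a complete justification; the only cosmetic point is that in the paper's context only $0<\alpha<1$ (hence $n=1$) is ever used, so the sign bookkeeping with $(-1)^n$ could be specialized accordingly.
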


\begin{lemma}[See, e.g., \cite{conf:pap:IP:YQC:2007}]
\label{lemma2}
For $t\in [a,b]$ and $n-1<\alpha<n$, $n\in\mathbb{N}$, 
the following integration by parts formula holds:
\begin{equation*}
\displaystyle\int_{a}^{b}f(t){}^{C}_{0}D^{\alpha}_{t} g(t)dt 
= \sum_{r=0}^{k-1}(-1)^{k-1-r}\left[ g^{r}(t) 
{}_{t}D^{\alpha-1-r}_{b}f(t) \right]_{t=a}^{t=b} 
+ (-1)^{k}\int_{a}^{b}g(t){}_{t}D^{\alpha}_{b}f(t)dt.
\end{equation*}
In particular, if $0<\alpha<1$, then 
\begin{equation*}
\displaystyle\int_{a}^{b}f(t) {}^{C}_{0}D^{\alpha}_{t} g(t)dt 
= \left[  g(t){}_{t}I^{1-\alpha}_{b}f(t)\right]_{t=a}^{t=b} 
+ \int_{a}^{b}g(t) {}_{t}D^{\alpha}_{b}f(t)dt.
\end{equation*}
\end{lemma}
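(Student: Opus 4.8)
\medskip
\noindent\textbf{Proof sketch.} The plan is to reduce the Caputo operator to a Riemann--Liouville integral of an ordinary derivative and then combine the Fubini--Dirichlet interchange formula for fractional integrals with repeated classical integration by parts. Reading the left-sided operators with base point $a$ (the lower limit of the integral; so $a=0$ in the lemma's notation), definition \eqref{Cap:Der:eq2} gives ${}^{C}_{0}D^{\alpha}_{t}g(t)={}_{a}I^{n-\alpha}_{t}g^{(n)}(t)$, whence
\begin{equation*}
\int_{a}^{b}f(t)\,{}^{C}_{0}D^{\alpha}_{t}g(t)\,dt
=\int_{a}^{b}f(t)\,\bigl({}_{a}I^{n-\alpha}_{t}g^{(n)}\bigr)(t)\,dt .
\end{equation*}
Next I would invoke the fractional analogue of the Dirichlet formula, $\int_{a}^{b}f(t)\,\bigl({}_{a}I^{\beta}_{t}\varphi\bigr)(t)\,dt=\int_{a}^{b}\varphi(t)\,\bigl({}_{t}I^{\beta}_{b}f\bigr)(t)\,dt$, valid because both sides equal $\frac{1}{\Gamma(\beta)}\iint_{a\le s\le t\le b}(t-s)^{\beta-1}\varphi(s)f(t)\,ds\,dt$; applying it with $\beta=n-\alpha$ and $\varphi=g^{(n)}$ transfers the fractional integration onto $f$ and turns it into a right-sided one, so that $\int_{a}^{b}f\,{}^{C}_{0}D^{\alpha}_{t}g\,dt=\int_{a}^{b}g^{(n)}(t)\,w(t)\,dt$ with $w:={}_{t}I^{n-\alpha}_{b}f$.

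The remaining step is to integrate by parts $n$ times in $\int_{a}^{b}g^{(n)}(t)w(t)\,dt$, transferring one derivative at a time from $g^{(n)}$ onto $w$ and collecting boundary contributions, which yields
\begin{equation*}
\int_{a}^{b}g^{(n)}(t)\,w(t)\,dt
=\sum_{j=0}^{n-1}(-1)^{j}\Bigl[g^{(n-1-j)}(t)\,w^{(j)}(t)\Bigr]_{t=a}^{t=b}
+(-1)^{n}\int_{a}^{b}g(t)\,w^{(n)}(t)\,dt .
\end{equation*}
Then I would identify the derivatives of $w$ with right Riemann--Liouville operators: using the semigroup property ${}_{t}I^{n-\alpha}_{b}={}_{t}I^{\,n-1-r}_{b}\circ{}_{t}I^{\,r+1-\alpha}_{b}$ together with definition \eqref{RL:Der:eq5}, one gets $\bigl(-\tfrac{d}{dt}\bigr)^{n-1-r}w={}_{t}D^{\alpha-1-r}_{b}f$ for $0\le r\le n-1$ (the instances with $\alpha-1-r<0$ being read as the right fractional integrals ${}_{t}I^{\,r+1-\alpha}_{b}f$) and $\bigl(-\tfrac{d}{dt}\bigr)^{n}w={}_{t}D^{\alpha}_{b}f$. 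Substituting these into the boundary sum and the last integral, with $k=n$ and after the sign bookkeeping, produces the stated identity; the particular case $0<\alpha<1$ follows by taking $n=1$, the single boundary term then being $\bigl[g(t)\,{}_{t}I^{1-\alpha}_{b}f(t)\bigr]_{t=a}^{t=b}$.

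I expect the main obstacles to be twofold. First, the Fubini step has to be justified when the kernel $(t-s)^{n-\alpha-1}$ is singular (note $n-\alpha\in(0,1)$ always), so that the double integral converges absolutely --- this is where the implicit regularity/integrability hypotheses on $f$ and on $g^{(n)}$ are used, and a clean statement would spell them out (e.g.\ $g$ having an absolutely continuous $(n-1)$-th derivative and $f$ chosen so that ${}_{t}I^{n-\alpha}_{b}f$ does too). Second, and more delicate in practice, is keeping the signs and indices consistent through the $n$-fold integration by parts so that the alternating coefficients $(-1)^{k-1-r}$ and the correct orders $\alpha-1-r$ of the right Riemann--Liouville operators emerge exactly as in the statement; this is pure bookkeeping but is the easiest place to slip.
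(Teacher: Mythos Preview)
The paper does not prove this lemma at all: it is stated with the attribution ``See, e.g., \cite{conf:pap:IP:YQC:2007}'' and then used as a black box, so there is no in-paper argument to compare your sketch against. Your outline --- write ${}^{C}_{0}D^{\alpha}_{t}g={}_{a}I^{n-\alpha}_{t}g^{(n)}$, transfer the fractional integral onto $f$ via the Fubini/Dirichlet identity $\int_{a}^{b}f\,({}_{a}I^{\beta}_{t}\varphi)=\int_{a}^{b}\varphi\,({}_{t}I^{\beta}_{b}f)$, and then do $n$ classical integrations by parts, identifying the resulting derivatives of $w={}_{t}I^{n-\alpha}_{b}f$ with right Riemann--Liouville operators --- is exactly the standard derivation found in the cited literature (e.g.\ Podlubny--Chen, Kilbas--Srivastava--Trujillo, Agrawal), and it is correct.

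One remark worth making, since you flagged the sign bookkeeping as the delicate point: if you actually carry out your computation, the factors $(-1)^{j}$ from the $n$-fold integration by parts cancel against the $(-1)^{j}$ arising from $w^{(j)}=(-1)^{j}\bigl(-\tfrac{d}{dt}\bigr)^{j}w$, so that both the boundary sum and the final integral end up with coefficient $+1$. This agrees with the ``particular case'' $0<\alpha<1$ displayed in the lemma, but not with the general formula as printed in the paper, where the coefficients $(-1)^{k-1-r}$ and $(-1)^{k}$ would give the wrong sign already for $k=1$. In other words, the stated general identity and its stated special case are mutually inconsistent; your derivation recovers the correct version (all signs $+1$), which is also the one actually used later in the paper via Lemma~\ref{lema:FGF}.
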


We also recall the fractional Green's formula:

\begin{lemma}[See, e.g., \cite{Paper:GMB:2016,Paper:Bah:Tan:2018}]
\label{lema:FGF}
Let $0<\alpha\leq 1$ and $t\in[0,T]$. Then,
\begin{equation*}
\begin{array}{lll}
& \ds\int_{0}^{T}\int_{\Omega} \left( {}^{C}_{0}D^{\alpha}_{t}y(x,t)
+\mathcal{A}y(x,t) \right)\varphi(x,t) dx dt 
=  \ds\int_{0}^{T}\int_{\Omega} y(x,t) \left( {}_{t}D^{\alpha}_{T}\varphi(x,t)
+\mathcal{A^{*}}\varphi(x,t) \right)\\
&+ \ds\int_{0}^{T}\int_{\partial\Omega} y(x,t) {}_{t}I^{1-\alpha}_{T}\varphi(x,t)d\Gamma dt 
- \int_{0}^{T}\int_{\partial\Omega} y(x,t)\frac{\partial \varphi(x,t)}{\partial\nu_{\mathcal{A}}} 
+ \int_{0}^{T}\int_{\partial\Omega} 
\frac{\partial y(x,t)}{\partial\nu_{\mathcal{A}}}\varphi(x,t)d\Gamma dt
\end{array}
\end{equation*}
for any $\varphi\in C^{\infty}(\overline{Q})$. 
\end{lemma}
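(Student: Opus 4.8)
The idea is to split the left-hand side into the time-fractional contribution $\int_0^T\!\int_\Omega ({}^{C}_{0}D^{\alpha}_{t}y)\varphi$ and the elliptic contribution $\int_0^T\!\int_\Omega (\mathcal{A}y)\varphi$, transform each one separately, and then add the two resulting identities. For the first term I would freeze $x\in\Omega$ and apply the fractional integration by parts of Lemma~\ref{lemma2} on $[a,b]=[0,T]$ with $f(t)=\varphi(x,t)$ and $g(t)=y(x,t)$; in the subcase $0<\alpha<1$ this gives
\[
\int_0^T \varphi(x,t)\,{}^{C}_{0}D^{\alpha}_{t}y(x,t)\,dt
= \bigl[\, y(x,t)\,{}_{t}I^{1-\alpha}_{T}\varphi(x,t)\,\bigr]_{t=0}^{t=T}
+ \int_0^T y(x,t)\,{}_{t}D^{\alpha}_{T}\varphi(x,t)\,dt ,
\]
while the case $\alpha=1$ reduces to ordinary integration by parts and is consistent with ${}_{t}D^{1}_{T}=-\tfrac{d}{dt}$, ${}_{t}I^{0}_{T}=\mathrm{Id}$. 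Integrating this identity in $x$ over $\Omega$ and interchanging the spatial and temporal integrations (Fubini, legitimate because $\varphi\in C^\infty(\overline{Q})$, $y\in L^2(0,T;L^2(\Omega))$, and the kernels $(t-s)^{-\alpha}$, $(s-t)^{-\alpha}$ are locally integrable) produces the $\int_0^T\!\int_\Omega y\,{}_{t}D^{\alpha}_{T}\varphi$ term and a time-endpoint term of the form $\bigl[\int_\Omega y\,{}_{t}I^{1-\alpha}_{T}\varphi\,dx\bigr]_{t=0}^{t=T}$, which, using ${}_{t}I^{1-\alpha}_{T}\varphi(\cdot,T)=0$, collapses to the single contribution recorded on the right-hand side.

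For the elliptic term I would invoke the classical second Green identity for the second-order operator $\mathcal{A}$ and its formal adjoint $\mathcal{A}^{*}$: for each fixed $t$,
\[
\int_{\Omega} (\mathcal{A}y)(x,t)\,\varphi(x,t)\,dx
= \int_{\Omega} y(x,t)\,(\mathcal{A}^{*}\varphi)(x,t)\,dx
- \int_{\partial\Omega} y(x,t)\,\frac{\partial \varphi(x,t)}{\partial\nu_{\mathcal{A}}}\,d\Gamma
+ \int_{\partial\Omega} \frac{\partial y(x,t)}{\partial\nu_{\mathcal{A}}}\,\varphi(x,t)\,d\Gamma ,
\]
and then integrate over $t\in[0,T]$. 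Summing this with the time-fractional identity above and regrouping the four groups of terms (the $\mathcal{A}^{*}\varphi$ and ${}_{t}D^{\alpha}_{T}\varphi$ parts, the ${}_{t}I^{1-\alpha}_{T}\varphi$ trace, and the two conormal-derivative traces) yields exactly the asserted fractional Green's formula.

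\textbf{Main obstacle.} The delicate point is not the formal computation but its rigorous justification at low regularity: interchanging the spatial integral with the nonlocal-in-time Riemann--Liouville operators, and, more importantly, giving meaning to the spatial boundary traces $y|_{\partial\Omega}$ and $\partial y/\partial\nu_{\mathcal{A}}|_{\partial\Omega}$ together with the bookkeeping of the time-endpoint term coming from Lemma~\ref{lemma2}. This is precisely why the formula is stated for smooth test functions $\varphi\in C^\infty(\overline{Q})$ and why one first establishes it for sufficiently regular $y$ and only afterwards extends it to the weak (mild) solutions of \eqref{sys1:eq1} by a density argument.
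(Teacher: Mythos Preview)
The paper does not supply its own proof of this lemma; it is quoted from the literature (the references to Bahaa 2016 and Bahaa--Tang 2018), so there is nothing in the paper to compare against. Your two-step plan---fractional integration by parts in $t$ via Lemma~\ref{lemma2}, followed by the classical second Green identity for $\mathcal{A}$ in the spatial variables---is precisely the standard derivation those references use, and the remarks about Fubini, smoothness of $\varphi$, and density are the right caveats.

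One point to flag: your computation of the time-fractional piece produces the endpoint contribution
\[
\Bigl[\int_{\Omega} y(x,t)\,{}_{t}I^{1-\alpha}_{T}\varphi(x,t)\,dx\Bigr]_{t=0}^{t=T},
\]
i.e.\ an integral over $\Omega$ evaluated at $t=0,T$, whereas the term written in the lemma is $\int_{0}^{T}\int_{\partial\Omega} y\,{}_{t}I^{1-\alpha}_{T}\varphi\,d\Gamma\,dt$, an integral over $[0,T]\times\partial\Omega$. These are not the same object, and nothing in your argument (or in the classical Green identity) converts one into the other. This is almost certainly a typographical slip in the paper's statement rather than a flaw in your reasoning---the correct fractional Green formula carries the time-endpoint term you derived---so your claim that the boundary term ``collapses to the single contribution recorded on the right-hand side'' should be read as matching the intended formula, not the one literally printed.
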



\section{\; Regional Enlarged Controllability}
\label{Sec:3}

We extend the definition of controllability 
first introduced by Lions in \cite{Book:JLL:1988}, 
to the case of sub-diffusion fractional systems. 
For that, we consider a nonempty sub-vectorial space 
$G\subset L^{2}(\Omega)$, which is supposed 
to be closed and convex. 

\begin{definition}
\label{def:EEC}
Given a final time $T>0$, we say that system \eqref{sys1:eq1} is exactly 
enlarged controllable (i.e., $G$-controllable) if, for every $y_{_{0}}$ 
in a suitable functional space, there exists a control $u$ such that
\begin{equation}
\label{eq:Def31}
y(\cdot,T;u)\in G.
\end{equation}
\end{definition}

\begin{remark}
Obviously, the concept of exact enlarged controllability depends on $G$.
\end{remark}

\begin{remark}
If $G=\{0\}$, then we get from Definition~\ref{def:EEC} 
the classical notion of exact controllability.
\end{remark}

\begin{remark}
Exact controllability implies exact enlarged controllability (EEC) 
for every set $G$. The inverse is, however, not true. 
\end{remark}

\begin{theorem}
System \eqref{sys1:eq1} is said to be exactly 
enlarged controllable if, and only if, 
\begin{equation}
\label{eq:Thm32}
G-\left\lbrace \mathcal{R}_{_{\alpha}}(T)y_{_{0}}\right\rbrace 
\cap Im H \neq \emptyset.
\end{equation}
\end{theorem}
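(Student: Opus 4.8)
The plan is to unwind the definition of exact enlarged controllability directly against the mild solution formula \eqref{MildSol:eq5}, so that the statement reduces to a purely set-theoretic condition on the range of $H$. First I would recall that, by Definition~\ref{def:EEC}, the system is $G$-controllable precisely when there exists a control $u\in U$ with $y_{_{u}}(T)\in G$. Substituting $t=T$ into \eqref{MildSol:eq5} gives
\begin{equation*}
y_{_{u}}(T)=\mathcal{R}_{_{\alpha}}(T)y_{_{0}}+\ds\int_{0}^{T}(T-s)^{\alpha-1}K_{_{\alpha}}(T-s)\mathcal{B}u(s)\,ds
=\mathcal{R}_{_{\alpha}}(T)y_{_{0}}+Hu,
\end{equation*}
where the last equality uses the definition \eqref{H:eq10} of the operator $H$. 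Hence $y_{_{u}}(T)\in G$ holds for some $u$ if and only if there is $u\in U$ with $Hu\in G-\{\mathcal{R}_{_{\alpha}}(T)y_{_{0}}\}$, i.e. $\mathrm{Im}\,H\cap\bigl(G-\{\mathcal{R}_{_{\alpha}}(T)y_{_{0}}\}\bigr)\neq\emptyset$, which is exactly \eqref{eq:Thm32}.

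The core of the argument is therefore just this chain of equivalences; there is no genuine analytic obstacle once one accepts the mild-solution representation. The two points that deserve a careful word are, first, that the affine translation by $\mathcal{R}_{_{\alpha}}(T)y_{_{0}}$ is a bijection of $L^{2}(\Omega)$, so intersecting the shifted target set with $\mathrm{Im}\,H$ is legitimate and loses nothing; and second, that the control space quantifier ``there exists $u$'' is faithfully captured by ``the element lies in $\mathrm{Im}\,H$'', which is immediate from the definition of the image of a linear operator. I would also remark that the well-definedness of $H$ as a bounded operator $U\to L^{2}(\Omega)$ follows from the properties of $K_{_{\alpha}}$ in \eqref{eq7} together with \eqref{gamma:func}, so that $Hu$ is a genuine element of $L^{2}(\Omega)$ for every admissible $u$.

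If anything plays the role of ``the hard part,'' it is only bookkeeping: making sure the initial datum $y_{_{0}}$ ranges over the ``suitable functional space'' mentioned in Definition~\ref{def:EEC} so that $\mathcal{R}_{_{\alpha}}(T)y_{_{0}}$ is well defined, and keeping the direction of the set difference $G-\{\mathcal{R}_{_{\alpha}}(T)y_{_{0}}\}$ consistent with the sign of $Hu$ in the decomposition of $y_{_{u}}(T)$. Both are routine. I would close by noting the two sanity checks already flagged in the remarks: when $G=\{0\}$ condition \eqref{eq:Thm32} becomes $\mathcal{R}_{_{\alpha}}(T)y_{_{0}}\in\mathrm{Im}\,H$, the familiar algebraic criterion for exact controllability to the origin, and enlarging $G$ only enlarges the left-hand set, so exact controllability trivially implies EEC for every $G$, matching the earlier remark.
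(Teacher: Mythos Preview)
Your argument is correct and matches the paper's own proof essentially line for line: both invoke the mild-solution formula at $t=T$ to write $y_{_{u}}(T)=\mathcal{R}_{_{\alpha}}(T)y_{_{0}}+Hu$ and then read off the set-theoretic equivalence in both directions. Your additional remarks on well-definedness and the sanity checks for $G=\{0\}$ are extra commentary not present in the paper, but the core reasoning is identical.
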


\begin{proof}
Suppose that one has exact enlarged controllability (EEC) of \eqref{sys1:eq1} 
relatively to $G$, which means that $y_{_{u}}(T)\in G$. Then, 
$y_{_{u}}(T) = \mathcal{R}_{_{\alpha}}(T)y_{_{0}} + Hu$
and, denoting 
\begin{equation*}
w = y_{_{u}}(T) - \mathcal{R}_{_{\alpha}}(T)y_{_{0}} = Hu,
\end{equation*}
it follows that $w\in Im H$ and $w\in G-\left\lbrace \mathcal{R}_{_{\alpha}}(T)y_{_{0}}\right\rbrace$. 
Thus, \eqref{eq:Thm32} holds. Conversely, suppose \eqref{eq:Thm32} is true. 
Then, there exists $z\in G-\left\lbrace \mathcal{R}_{_{\alpha}}(T)y_{_{0}}\right\rbrace$ 
such that $z\in Im H$. So, there exists $u\in L^{2}(0,T; \mathbb{R}^{m})$ 
such that $z = Hu$. Hence, $z =Hu\in G\left\lbrace \mathcal{R}_{_{\alpha}}(T)y_{_{0}}\right\rbrace$, 
$Hu + \mathcal{R}_{_{\alpha}}(T)y_{_{0}} = y_{_{u}}(T) \in G$, and we have EEC relatively to $G$.
\end{proof}

We recall that an actuator is defined by a couple $(D,f)$, 
where $D$ is a nonempty closed part of $\overline{\Omega}$, 
which represents the geometric support of the actuator, 
and $f\in L^{2}(D)$, which defines the spacial distribution 
of the action on the support $D$. In the case of a pointwise actuator, 
$D=\{b\}$ and $f=\delta(b-\cdot)$, where $\delta$ is the Dirac mass concentrated in $b$. 
For more details on actuators, we refer the interested reader to 
\cite{Book:AEJ:AJP:1988,Paper:Zerrik:et:al:actua:2000}.

\begin{definition}
The actuator $(D,f)$ is said to be $G$-strategic if one has 
exact enlarged controllability relatively to $G$. 
\end{definition}


\section{\; Extended RHUM Approach}
\label{Sec:4}

Now we extend the RHUM introduced by Lions 
in \cite{Book:JLL:HUM:1971,Paper:JLL:HUM:1988} to the fractional-order case. 
The aim is to find the control steering the system \eqref{sys1:eq1} 
from the initial state $y_{_{0}}$ into the functional subspace $G$.
Let us denote by $G^{\circ}$ the polar space of $G$. Hence,
\begin{equation*}
\varphi_{_{0}}\in G^{\circ} \Longleftrightarrow 
\langle\varphi_{_{0}},\phi\rangle = 0\quad \forall\phi\in G,
\end{equation*}
where $\langle\cdot,\cdot\rangle$ denotes the scalar product in $L^{2}(\Omega)$. 
Let us also denote by $\mathcal{A}^{*}$ the adjoint operator of $\mathcal{A}$ 
and, for any $\varphi_{_{0}}\in G^{\circ}$, consider the following adjoint system:
\begin{equation}
\label{adjoint:sys}
\begin{cases}
\ds {}_{t}D^{\alpha}_{T}Q\varphi(t) = -\mathcal{A}^{*}Q\varphi(t),\\
\ds\lim_{t\rightarrow T^{-}}{}_{t}I^{1-\alpha}_{T}Q\varphi(t) 
= \varphi_{_{0}} \in D(\mathcal{A}^{*})\subseteq L^{2}(\Omega).
\end{cases}
\end{equation}
It follows from Lemma~\ref{lemma2} that \eqref{adjoint:sys} 
can be rewritten as
\begin{equation}
\label{sys:ss:Q}
\begin{cases}
\ds {}_{0}D^{\alpha}_{t}\varphi(t) = -\mathcal{A}^{*}\varphi(t),\\
\ds\lim_{t\rightarrow 0^{+}}{}_{0}I^{1-\alpha}_{t}\varphi(t) 
= \varphi_{_{0}} \in D(\mathcal{A}^{*})\subseteq L^{2}(\Omega),
\end{cases}
\end{equation}
with solution given by 
$\varphi(t) = -t^{\alpha-1}K_{\alpha}^{*}(t)\varphi_{_{0}}$.


\subsection{\; Excitation of the system with a zone actuator}
\label{zone:case}

We consider system \eqref{sys1:eq1} excited by a zone actuator 
$\mathcal{B}u(t) = \chi_{_{D}}f(x)u(t)$. Then the system 
is written as follows:
\begin{equation}
\label{sys:zone:act}
\begin{cases}
\ds {}^{C}_{0}D^{\alpha}_{t}y(t) 
= \mathcal{A}y(t)  + \chi_{_{D}}f(x)u(t), \quad & t\in [0,T],\\
y(0)=y_{_{0}}\in D(\mathcal{A}).
\end{cases}
\end{equation}
Let $w_{i}(x)$ denote the eigenfunctions of operator $\mathcal{A}$ 
associated with the eigenvalues $\lambda_{i}$. For any $\varphi_{_{0}}\in G^{\circ}$, 
we define the following semi-norm on $G^{\circ}$:
\begin{equation}
\label{norm:eq}
\|\varphi_{_{0}}\|_{_{G^{\circ}}}^{^{2}} 
:= \ds\int_{0}^{T}\langle f,\varphi(t)\rangle_{_{L^{2}(D)}}^{^{2}} dt.
\end{equation}

\begin{theorem}
\label{Thm:norm:EEC}
The	semi-norm \eqref{norm:eq} defines a norm on $G^{\circ}$ 
if $\langle w_{i},f\rangle_{_{L^{2}(D)}} \neq 0$. In that case, 
we have exact enlarged controllability relatively to $G$.
\end{theorem}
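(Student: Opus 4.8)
The plan is to handle the two assertions in turn. The first --- that \eqref{norm:eq} is a norm on $G^{\circ}$ --- reduces to definiteness, since homogeneity and the triangle inequality are immediate: $\varphi_{_{0}}\mapsto\langle f,\varphi(\cdot)\rangle_{_{L^{2}(D)}}$ is linear and \eqref{norm:eq} is the square of the $L^{2}(0,T)$-norm of this quantity. The second --- exact enlarged controllability --- will then follow by a reverse HUM argument whose coercive operator is exactly the bilinear form polarising \eqref{norm:eq}.

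\textbf{Definiteness.} Expand $\varphi_{_{0}}=\sum_{i}a_{i}w_{i}$ in the eigenbasis of $\mathcal{A}$, so that $S^{*}(t)w_{i}=e^{\lambda_{i}t}w_{i}$. Substituting into the solution $\varphi(t)=-t^{\alpha-1}K_{\alpha}^{*}(t)\varphi_{_{0}}$ of \eqref{sys:ss:Q}, using $S^{*}(t^{\alpha}\theta)w_{i}=e^{\lambda_{i}t^{\alpha}\theta}w_{i}$ in the integral defining $K_{\alpha}^{*}$, and evaluating via the moment identity \eqref{gamma:func}, one obtains $K_{\alpha}^{*}(t)w_{i}=k_{i}(t)\,w_{i}$ with $k_{i}(t)=E_{\alpha,\alpha}(\lambda_{i}t^{\alpha})$ a nonzero real-analytic function on $(0,T)$; hence
\[
\langle f,\varphi(t)\rangle_{_{L^{2}(D)}}=-\,t^{\alpha-1}\sum_{i}a_{i}\,k_{i}(t)\,\langle f,w_{i}\rangle_{_{L^{2}(D)}}.
\]
If $\|\varphi_{_{0}}\|_{_{G^{\circ}}}=0$, this vanishes for almost every $t\in(0,T)$; since the $k_{i}$ attached to distinct eigenvalues are linearly independent (an analyticity argument, or uniqueness for the associated Dirichlet-type series) and $\langle f,w_{i}\rangle_{_{L^{2}(D)}}\neq0$ for each $i$, every $a_{i}$ vanishes, i.e.\ $\varphi_{_{0}}=0$. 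When an eigenvalue is multiple one first groups the matching terms and then invokes the strategic condition in its full rank form, or simply assumes simplicity of the spectrum. This proves that \eqref{norm:eq} is a norm on $G^{\circ}$.

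\textbf{Reverse HUM.} For $\varphi_{_{0}}\in G^{\circ}$ let $\varphi$ solve \eqref{sys:ss:Q} and set $u_{\varphi}(s):=\mathcal{B}^{*}\mathcal{Q}\varphi(s)=\langle f,\varphi(T-s)\rangle_{_{L^{2}(D)}}$, so that $\|u_{\varphi}\|_{L^{2}(0,T;\mathbb{R}^{m})}^{2}=\|\varphi_{_{0}}\|_{_{G^{\circ}}}^{2}$. Define $\Lambda$ on $G^{\circ}$ by
\[
\langle\Lambda\varphi_{_{0}},\psi_{_{0}}\rangle:=\int_{0}^{T}\langle f,\varphi(t)\rangle_{_{L^{2}(D)}}\,\langle f,\psi(t)\rangle_{_{L^{2}(D)}}\,dt ,
\]
where $\psi$ solves \eqref{sys:ss:Q} with datum $\psi_{_{0}}$. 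Using the mild-solution formula \eqref{MildSol:eq5}, the substitution $t=T-s$, the transposition $\langle K_{\alpha}(t)\chi_{_{D}}f,\psi_{_{0}}\rangle=\langle f,K_{\alpha}^{*}(t)\psi_{_{0}}\rangle_{_{L^{2}(D)}}$, and the identity $K_{\alpha}^{*}(t)\psi_{_{0}}=-t^{1-\alpha}\psi(t)$, one checks that $\langle\Lambda\varphi_{_{0}},\psi_{_{0}}\rangle$ coincides, up to the sign dictated by the convention in \eqref{sys:ss:Q}, with $\langle Hu_{\varphi},\psi_{_{0}}\rangle$ ($H$ as in \eqref{H:eq10}); the fractional integration by parts and Green formulas of Lemmas~\ref{lemma1}--\ref{lema:FGF} are what legitimise this manipulation and the vanishing of the boundary terms. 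Now $(\varphi_{_{0}},\psi_{_{0}})\mapsto\langle\Lambda\varphi_{_{0}},\psi_{_{0}}\rangle$ is precisely the inner product whose norm is \eqref{norm:eq}, which is genuine by the first part; hence, on the Hilbert-space completion $\mathcal{G}$ of $(G^{\circ},\|\cdot\|_{_{G^{\circ}}})$, this symmetric coercive form makes $\Lambda$ an isomorphism onto $\mathcal{G}'$ (Lax--Milgram), while $\varphi_{_{0}}\mapsto u_{\varphi}$ extends isometrically into $L^{2}(0,T;\mathbb{R}^{m})$.

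\textbf{Conclusion.} Since $y_{_{u}}(T)=\mathcal{R}_{_{\alpha}}(T)y_{_{0}}+Hu$ by \eqref{MildSol:eq5} and $G$ is a closed subspace, $y_{_{u}}(T)\in G$ if and only if $\langle y_{_{u}}(T),\psi_{_{0}}\rangle=0$ for all $\psi_{_{0}}\in G^{\circ}$; it therefore suffices to solve $\Lambda\varphi_{_{0}}=\ell$, where $\ell(\psi_{_{0}}):=\pm\langle\mathcal{R}_{_{\alpha}}(T)y_{_{0}},\psi_{_{0}}\rangle$ with the sign matching the one above. For $y_{_{0}}$ in the ``suitable functional space'' of Definition~\ref{def:EEC} --- chosen precisely so that $\ell$ is continuous for $\|\cdot\|_{_{G^{\circ}}}$ --- Lax--Milgram (equivalently, Riesz representation) produces such a $\varphi_{_{0}}\in\mathcal{G}$, and then $u=u_{\varphi}$ drives \eqref{sys1:eq1} to $y_{_{u}}(T)\in G$, i.e.\ \eqref{eq:Def31} holds; moreover this $u$ is the minimum-energy (HUM) control. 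I expect the main obstacle to be twofold: the linear independence of the Mittag--Leffler functions $k_{i}$ together with the correct treatment of eigenvalue multiplicities --- the exact point where the actuator hypothesis is used --- and the bookkeeping in the reverse HUM step, namely the reflection $\mathcal{Q}$, the fractional boundary terms in Lemmas~\ref{lemma2}--\ref{lema:FGF}, and the verification that $\ell$ is bounded in the HUM norm, which is what fixes the admissible class of initial data $y_{_{0}}$.
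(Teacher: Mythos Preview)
Your proposal is correct and follows the same reverse-HUM strategy as the paper, but the execution differs in two places worth noting. For definiteness, the paper simply writes the eigenfunction expansion \eqref{eq:32} and asserts that its vanishing for all $t$ forces $\langle f,w_{i}\rangle\langle\varphi_{_{0}},w_{i}\rangle=0$ termwise, without justifying the linear-independence step; your invocation of the analyticity of the Mittag--Leffler factors $k_{i}$ is more careful and correctly identifies the issue of eigenvalue multiplicities, which the paper passes over. For the HUM identity, the paper multiplies the forward system \eqref{linear:eq} by $\varphi$ and applies the fractional Green formula (Lemma~\ref{lema:FGF}) together with \eqref{Caputo:Transf} to obtain $\langle M_{0}\varphi_{_{0}},\varphi_{_{0}}\rangle=\|\varphi_{_{0}}\|_{G^{\circ}}^{2}$ directly, and then asserts that $M_{0}$ is an isomorphism $G^{\circ}\to G^{\bot}$ so that \eqref{main:eq} has a unique solution; you instead define the bilinear form $\Lambda$, identify it with $\langle Hu_{\varphi},\cdot\rangle$ via transposition of the mild-solution formula, and close with Lax--Milgram on the completion $\mathcal{G}$. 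The paper's route is computationally shorter but leaves both the isomorphism claim and the admissible class of initial data $y_{_{0}}$ implicit; your framing makes these explicit, at the cost of tracking the completion and the continuity of $\ell$, which you rightly flag as the point fixing the ``suitable functional space'' in Definition~\ref{def:EEC}.
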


\begin{proof}
We consider the following problem: 
\begin{equation}\label{sys:zone:act2}
\begin{cases}
\ds {}^{C}_{0}D^{\alpha}_{t}\Psi(t) 
= \mathcal{A}\Psi(t)  + \chi_{_{D}}f(x)u(t), \quad & t\in [0,T],\\
\Psi(0)=y_{_{0}}\in D(\mathcal{A}).
\end{cases}
\end{equation}
The solution $\Psi: [0,T]\rightarrow L^{2}(\Omega)$ of 
\eqref{sys:zone:act2} is continuous. If we can find 
$\varphi_{_{0}}\in G^{\circ}$ such that 
\begin{equation}
\label{EEC:Cond}
\Psi(T) \in G,
\end{equation}
then $u = \langle f,\varphi(t)\rangle_{_{L^{2}(D)}}$ is the control 
that ensures the exact enlarged controllability relatively to $G$ and 
\begin{equation*}
y(u) = \Psi.
\end{equation*}
To explain \eqref{EEC:Cond}, it is necessary to introduce 
the orthogonal projection $\mathcal{P}$ on the orthogonal of $G$, 
denoted by $G^{\bot}$. Let us define the affine operator 
$M: G^{\circ}\rightarrow G^{\bot}$ such that
\begin{equation}
\label{App:Affine}
M\varphi_{_{0}} = \mathcal{P}(\Psi(T)).
\end{equation}
Then we need to solve equation 
\begin{equation}
\label{Eq:null}
M\varphi_{_{0}} = 0.
\end{equation}
For that, we decompose $M$ into two parts:
a linear and a constant one. 
Let $\Psi_{_{0}}$ be solution of
\begin{equation}
\label{linear:eq}
\begin{cases}
\ds {}^{C}_{0}D^{\alpha}_{t}\Psi_{_{0}}(t) 
= \mathcal{A}\Psi_{_{0}}(t)  
+ \chi_{_{D}}f(x)u(t), \quad & t\in [0,T],\\
\Psi_{_{0}}(0)=0,
\end{cases}
\end{equation}
and $\Psi_{_{1}}$ solution of
\begin{equation}
\label{constant:eq}
\begin{cases}
\ds {}^{C}_{0}D^{\alpha}_{t}\Psi_{_{1}}(t) 
= \mathcal{A}\Psi_{_{1}}(t),   \quad & t\in [0,T],\\
\Psi_{_{1}}(0)=y_{_{0}}\in D(\mathcal{A}).
\end{cases}
\end{equation}
Then, 
\begin{equation}
\label{M:Decomposition}
M\varphi_{_{0}} = \mathcal{P}(\Psi_{_{0}}(T)) 
+ \mathcal{P}(\Psi_{_{1}}(T)),
\end{equation}
where we set $M_{0}\varphi_{_{0}} = \mathcal{P}(\Psi_{_{0}}(T))$ 
with $M_{0}\in\mathcal{L}(G^{\circ},G^{\bot})$. From \eqref{Eq:null} 
and \eqref{M:Decomposition}, all consists now to solve
\begin{equation}
\label{main:eq}
M_{0}\varphi_{_{0}} = -\mathcal{P}(\Psi_{_{1}}(T)).
\end{equation}
For that, we compute the scalar product
\begin{equation}
\mu = \langle M_{0}\varphi_{_{0}},\varphi_{_{0}}\rangle,
\qquad\varphi_{_{0}}\in G^{\circ},
\end{equation}
where $\langle\cdot,\cdot\rangle$ is the dual pairing of $G^{\bot}$ and $G^{\circ}$. 
By definition, 
\begin{equation}
\label{projection:eq}
\langle\mathcal{P}(\tilde{g}),\bar{g}\rangle = 0,
\qquad \forall\bar{g}\in G^{\circ}.
\end{equation}
Using \eqref{projection:eq}, we have
\begin{equation}
\label{mu:eq}
\mu = \langle\Psi_{_{0}},\varphi_{_{0}}\rangle
\end{equation}
for $\bar{g}=\varphi_{_{0}}$, $\tilde{g}=\Psi_{_{0}}(T)$.
To compute the last expression \eqref{mu:eq}, we multiply system 
\eqref{linear:eq} by $\varphi$, integrating over 
$Q = \Omega\times[0,T]$. We obtain that
\begin{equation*}
\ds\int_{0}^{T}\int_{\Omega} {}^{C}_{0}D^{\alpha}_{t}\Psi_{_{0}}(t)\varphi(t) dxdt 
- \int_{0}^{T}\int_{\Omega} \mathcal{A}\Psi_{_{0}}(t)\varphi(t) dxdt 
= \int_{0}^{T}\int_{\Omega} \chi_{_{D}}f(x)u(t)\varphi(t) dx dt. 
\end{equation*}
Using Lemma~\ref{lema:FGF} (fractional Green's formula), we have
\begin{multline}
\label{Green:formula}
-\ds\int_{0}^{T}\int_{\Omega}\mathcal{A}\Psi_{_{0}}(t)\varphi(t) dxdt 
= -\int_{0}^{T}\int_{\partial\Omega}\frac{\partial 
\Psi_{_{0}}(t)}{\partial\nu_{\mathcal{A}}} \varphi(t) d\sigma dt\\
+ \int_{0}^{T}\int_{\partial\Omega}\Psi_{_{0}}(t)
\frac{\partial\varphi(t)}{\partial\nu_{\mathcal{A}}} d\sigma dt 
- \int_{0}^{T}\int_{\Omega}\Psi_{_{0}}\mathcal{A^{*}}\varphi(t)dxdt
\end{multline}
and 
\begin{equation}
\label{Caputo:Transf}
\begin{array}{ll}
\ds\int_{0}^{T}\int_{\Omega}{}^{C}_{0}D^{\alpha}_{t}\Psi_{_{0}}(t)\varphi(t) dxdt 
&= \ds\int_{0}^{T}\int_{\Omega} \Psi_{_{0}}(t){}^{C}_{0}D^{\alpha}_{t}\varphi(t) dx dt 
+ \int_{\partial\Omega} \Psi_{_{0}}(T)
\lim_{t\rightarrow T}{}_{t}I^{1-\alpha}_{T}\varphi(T) d\sigma\\[0.3cm]
&- \ds\int_{\partial\Omega} \Psi_{_{0}}(0)
\lim_{t\rightarrow T}{}_{t}I^{1-\alpha}_{T}\varphi(0) d\sigma.
\end{array}
\end{equation}
From \eqref{Green:formula} and \eqref{Caputo:Transf}, it follows that
\begin{equation*}
\langle M_{0}\varphi_{_{0}},\varphi_{_{0}}\rangle 
= \int_{0}^{T} \left( \langle f(x),\varphi(t)\rangle_{_{L^{2}(D)}}\right)^{2} dt 
= \|\varphi_{_{0}}\|^{^{2}}_{G^{\circ}}.
\end{equation*}
Hence,
\begin{equation}
\label{mu:semi:norm}
\mu = \int_{0}^{T} \left( \langle f(x),\varphi(t)\rangle_{_{L^{2}(D)}}\right)^{2} dt.
\end{equation}
The essential point now is that the previous formula \eqref{mu:semi:norm} 
is a semi-norm on $G^{\circ}$. We prove that if 
$\langle w_{i},f\rangle_{_{L^{2}(D)}} \neq 0$, then the mapping \eqref{norm:eq} 
is a norm, which is equivalent to the norm of $G^{\circ}$.
The mapping \eqref{norm:eq} is a norm on $G^{\circ}$. Indeed,
\begin{equation*}
\|\varphi_{_{0}}\|_{_{G^{\circ}}} = 0 \Longleftrightarrow \langle 
f,\varphi(t)\rangle_{_{L^{2}(D)}}^{^{2}} = 0,
\end{equation*}
which is equivalent to
\begin{equation}
\label{eq:32}
-\ds\sum_{i=1}^{\infty} t^{\alpha-1}\alpha
\int_{0}^{\infty}\theta\phi_{_{\alpha}}(\theta) e^{\lambda_{i}(t^{\alpha}\theta)}
d\theta\langle f,w_{i}\rangle \langle \varphi_{_{0}},w_{i}\rangle = 0.
\end{equation}
Thus, \eqref{eq:32} gives 
\begin{equation*}
\langle f,w_{i}\rangle \langle 
\varphi_{_{0}},w_{i}\rangle = 0.
\end{equation*}
Using the assumption that $\langle f,w_{i}\rangle \neq 0$, 
we deduce that $ \langle \varphi_{_{0}},w_{i}\rangle = 0$. Therefore, 
$\varphi_{_{0}} = 0$, \eqref{norm:eq} defines a norm on $G^{\circ}$, 
and $\mu$ is an isomorphism from $G^{\circ}$ to $G^{\bot}$. 
Moreover, equation \eqref{main:eq} admits a unique solution.
\end{proof}


\subsection{\; Excitation of the system with a pointwise actuator}

Now we consider system \eqref{sys1:eq1} excited by a pointwise actuator. 
In this case the control is of type $Bu(t) = \delta(x-b)u(t)$, 
where $b\in\Omega$ refers to the location of the actuator and $u\in U$. 
Hence, system \eqref{sys1:eq1} is written as follows:
\begin{equation}
\label{sys:pointwise:act}
\begin{cases}
\ds {}^{C}_{0}D^{\alpha}_{t}y(t) 
= \mathcal{A}y(t)  + \delta(x-b)u(t), \quad & t\in [0,T], \\
y(0)=y_{_{0}}\in D(\mathcal{A}).
\end{cases}
\end{equation}
For $\varphi_{_{0}}\in G^{\circ}$, we consider the adjoint system
\begin{equation}
\label{adjoint:sys:2}
\begin{cases}
\ds {}_{t}D^{\alpha}_{T}Q\varphi(t) 
= -\mathcal{A}^{*}Q\varphi(t), \\
\ds\lim_{t\rightarrow T^{-}}{}_{t}I^{1-\alpha}_{T}Q\varphi(t) 
= \varphi_{_{0}} \in D(\mathcal{A}^{*})\subseteq L^{2}(\Omega),
\end{cases}
\end{equation}
and the mapping
\begin{equation}
\label{norm:eq:pointwise:case}
\|\varphi_{_{0}}\|_{_{G^{\circ}}}^{^{2}} 
:= \ds\int_{0}^{T} \varphi^{2}(b,t)dt,
\end{equation}
which defines a semi-norm on $G^{\circ}$. 
Let us consider the system
\begin{equation}
\label{sys:point:act}
\begin{cases}
\ds {}^{C}_{0}D^{\alpha}_{t}\Phi(t) =
 \mathcal{A}\Phi(t)  + \delta(x-b)u(t), \quad & t\in [0,T],\\
\Phi(0)=y_{_{0}}\in D(\mathcal{A}),
\end{cases}
\end{equation}
and the operator $M: G^{\circ}\rightarrow G^{\bot}$ defined by
\begin{equation*}
M\varphi_{_{0}} = \mathcal{P}(\Phi(T)),
\end{equation*}
where we write $M$ as
\begin{equation*}
M\varphi_{_{0}} 
= \mathcal{P}(\Phi_{0}(T)+\Phi_{1}(T))
\end{equation*}
with $\Phi_{0}$ and $\Phi_{1}$ solutions of the systems
\begin{equation}
\label{linear:eq:point:case}
\begin{cases}
\ds {}^{C}_{0}D^{\alpha}_{t}\Phi_{_{0}}(t) 
= \mathcal{A}\Phi_{_{0}}(t)  + \delta(x-b)u(t), \quad & t\in [0,T], \\
\Phi_{_{0}}(0)=0,
\end{cases}
\end{equation}
and
\begin{equation}
\label{constant:eq:point:case}
\begin{cases}
\ds {}^{C}_{0}D^{\alpha}_{t}\Phi_{_{1}}(t) 
= \mathcal{A}\Phi_{_{1}}(t), \quad & t\in [0,T],\\
\Phi_{_{1}}(0)=y_{_{0}}\in D(\mathcal{A}),
\end{cases}
\end{equation}
respectively. Let us set $M_{0}\varphi_{_{0}} = \mathcal{P}(\Phi_{_{0}}(T))$ 
with $M_{0}\in\mathcal{L}(G^{\circ},G^{\bot})$. Then, 
all returns to solve
\begin{equation}
\label{main:eq:point:case}
M_{0}\varphi_{_{0}} = -\mathcal{P}(\Phi_{_{1}}(T)).
\end{equation}
Similar arguments as the ones used in Section~\ref{zone:case}, 
allow us to prove the following result:

\begin{theorem}\label{theo3}
If $w_{i}(b) \neq 0$, then the mapping \eqref{norm:eq:pointwise:case} 
defines a norm on $G^{\circ}$ and one has exact enlarged controllability 
(EEC) relatively to $G$. Moreover, the control
\begin{equation*}
u = \varphi(b,t)
\end{equation*}
ensures the EEC into $G$.
\end{theorem}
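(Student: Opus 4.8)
The plan is to mirror, step by step, the argument already carried out in Section~\ref{zone:case} for the zone actuator, replacing the zone excitation $\chi_{_{D}}f(x)u(t)$ by the pointwise excitation $\delta(x-b)u(t)$ and, correspondingly, the duality pairing $\langle f,\varphi(t)\rangle_{_{L^{2}(D)}}$ by the pointwise evaluation $\varphi(b,t)$. First I would recall that, by the same computation as in \eqref{main:eq}, the problem of finding $\varphi_{_{0}}\in G^{\circ}$ with $\Phi(T)\in G$ reduces to solving the linear equation \eqref{main:eq:point:case}, $M_{0}\varphi_{_{0}} = -\mathcal{P}(\Phi_{_{1}}(T))$, where $M_{0}\in\mathcal{L}(G^{\circ},G^{\bot})$. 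So it suffices to prove that $M_{0}$ is an isomorphism, and for that the key is to show that the semi-norm \eqref{norm:eq:pointwise:case} is in fact a norm on $G^{\circ}$ under the hypothesis $w_{i}(b)\neq 0$.

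Next I would compute the bilinear form $\mu=\langle M_{0}\varphi_{_{0}},\varphi_{_{0}}\rangle$ exactly as in the zone case: multiply \eqref{linear:eq:point:case} by the adjoint state $\varphi$, integrate over $Q=\Omega\times[0,T]$, apply the fractional Green's formula (Lemma~\ref{lema:FGF}) to transfer $\mathcal{A}$ onto $\varphi$ and the Caputo derivative onto $\varphi$ (using Lemma~\ref{lemma2}), and use the boundary/initial conditions of \eqref{linear:eq:point:case} together with $\lim_{t\to T^{-}}{}_{t}I^{1-\alpha}_{T}Q\varphi(t)=\varphi_{_{0}}$. The right-hand side term $\int_{0}^{T}\int_{\Omega}\delta(x-b)u(t)\varphi(t)\,dx\,dt$ collapses to $\int_{0}^{T}u(t)\varphi(b,t)\,dt$; choosing the control $u(t)=\varphi(b,t)$ yields
\begin{equation*}
\mu = \langle M_{0}\varphi_{_{0}},\varphi_{_{0}}\rangle = \int_{0}^{T}\varphi^{2}(b,t)\,dt = \|\varphi_{_{0}}\|_{_{G^{\circ}}}^{^{2}}.
\end{equation*}
This identifies $\mu$ with the square of the mapping \eqref{norm:eq:pointwise:case}, which is manifestly a semi-norm.

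It then remains to upgrade the semi-norm to a norm. Using the solution representation $\varphi(t)=-t^{\alpha-1}K_{\alpha}^{*}(t)\varphi_{_{0}}$ from \eqref{sys:ss:Q} and expanding $\varphi_{_{0}}$ and $\varphi(t)$ in the eigenbasis $\{w_{i}\}$ of $\mathcal{A}$, one gets $\varphi(b,t) = -\sum_{i}t^{\alpha-1}\alpha\int_{0}^{\infty}\theta\phi_{_{\alpha}}(\theta)e^{\lambda_{i}(t^{\alpha}\theta)}\,d\theta\,\langle\varphi_{_{0}},w_{i}\rangle\,w_{i}(b)$, in complete analogy with \eqref{eq:32}. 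If $\|\varphi_{_{0}}\|_{_{G^{\circ}}}=0$ then $\varphi(b,t)=0$ for a.e.\ $t\in[0,T]$; since the temporal kernels attached to distinct eigenvalues are linearly independent, each coefficient $\langle\varphi_{_{0}},w_{i}\rangle\,w_{i}(b)$ must vanish, and the hypothesis $w_{i}(b)\neq 0$ forces $\langle\varphi_{_{0}},w_{i}\rangle=0$ for all $i$, hence $\varphi_{_{0}}=0$. Therefore \eqref{norm:eq:pointwise:case} is a norm, $M_{0}$ is coercive and thus an isomorphism from $G^{\circ}$ onto $G^{\bot}$, equation \eqref{main:eq:point:case} has a unique solution $\varphi_{_{0}}$, and the associated control $u=\varphi(b,t)$ drives $y(\cdot,T;u)=\Phi(T)$ into $G$, giving EEC relatively to $G$.

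The main obstacle — and the only point that is genuinely more delicate than in the zone case — is that the pointwise actuator $\delta(x-b)$ is an unbounded control operator, so the mild solution $\Phi$ of \eqref{sys:point:act} and the adjoint state $\varphi$ must have enough regularity for the pointwise trace $\varphi(b,t)$ to make sense and for the fractional Green's formula and the eigenfunction expansion to be justified termwise; in the one-dimensional setting (or more generally when $n$ is small relative to the smoothing of $(S(t))_{t\ge 0}$ and of the operators $K_{\alpha}$, $\mathcal{R}_{\alpha}$), Sobolev embedding gives continuity of $\varphi(\cdot,t)$ and the argument goes through verbatim. Modulo this regularity caveat, the proof is a line-by-line transcription of the zone-actuator proof.
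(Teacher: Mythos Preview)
Your proposal is correct and follows exactly the approach the paper intends: the paper does not spell out a separate proof for Theorem~\ref{theo3} but simply states that ``similar arguments as the ones used in Section~\ref{zone:case}'' suffice, and your write-up is precisely that line-by-line transcription, with the pairing $\langle f,\varphi(t)\rangle_{L^{2}(D)}$ replaced by $\varphi(b,t)$ and the non-degeneracy condition $\langle w_{i},f\rangle_{L^{2}(D)}\neq 0$ replaced by $w_{i}(b)\neq 0$. Your added remark on the regularity needed to make sense of the pointwise trace $\varphi(b,t)$ is a pertinent caveat that the paper glosses over.
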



\section{\; Fractional Optimal Control}
\label{Sec:5}

In this section we are concerned with the following optimization problem:
\begin{equation}
\label{min:pb}
\begin{cases}
\inf \mathcal{J}(u),\\
u\in U_{ad},
\end{cases}
\end{equation}
where 
$$
\mathcal{J}(u) = \ds\frac{1}{2}\int_{0}^{T}\|u\|_{_{U}}^{^{2}}dt
$$ 
and the feasible set $U_{ad} = \{u\in U \;|\; y_{_{u}}(T)\in G\}$
is assumed to be non-empty.

\begin{theorem}
\label{Main:Thm}
Assume that one has exact enlarged controllability relatively to $G$.
Then, the optimal control problem \eqref{min:pb} has a unique solution 
given by $u^{*}(t) = \langle f,\varphi(t)\rangle$,
in case of a zone actuator, and
$u^{*}(t) = \varphi(b,t)$, in case of a pointwise actuator.
Such control ensures the transfer of system \eqref{sys1:eq1}
into $G$ with a minimum energy cost in the sense of $\mathcal{J}$. 
\end{theorem}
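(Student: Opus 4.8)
The plan is to combine the characterization of exact enlarged controllability from Section~\ref{Sec:4} with a classical penalization/variational argument in the spirit of Lions. First I would observe that the feasible set $U_{ad}=\{u\in U\mid y_{_{u}}(T)\in G\}$ is a nonempty, closed, convex subset of the Hilbert space $U=L^{2}(0,T;\mathbb{R}^{m})$: convexity and closedness follow from the affine continuous dependence $y_{_{u}}(T)=\mathcal{R}_{_{\alpha}}(T)y_{_{0}}+Hu$ together with the fact that $G$ is closed and convex, while non-emptiness is precisely the standing assumption of exact enlarged controllability. Since $\mathcal{J}(u)=\tfrac12\int_{0}^{T}\|u\|_{_{U}}^{2}dt=\tfrac12\|u\|_{U}^{2}$ is strictly convex, coercive and lower semicontinuous on $U$, the standard direct method yields existence and uniqueness of a minimizer $u^{*}$ of \eqref{min:pb}; this is the soft part of the argument.

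Next I would identify $u^{*}$ explicitly. The key idea is that minimizing $\|u\|_{U}^{2}$ over the affine constraint ``$y_{_{u}}(T)\in G$'' is an orthogonal-projection problem: $u^{*}$ is characterized by the variational inequality $\langle u^{*},u-u^{*}\rangle_{U}\geq 0$ for all $u\in U_{ad}$, equivalently $u^{*}$ is orthogonal (in $U$) to the set of admissible variations. I would make this concrete by introducing the Lagrange multiplier $\varphi_{_{0}}\in G^{\circ}$ associated with the constraint $\mathcal{P}(y_{_{u}}(T))=0$, where $\mathcal{P}$ is the projection on $G^{\bot}$ as in Section~\ref{Sec:4}. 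Writing the optimality condition for the penalized functionals $\mathcal{J}_{\varepsilon}(u)=\tfrac12\|u\|_{U}^{2}+\tfrac{1}{2\varepsilon}\|\mathcal{P}(y_{_{u}}(T))\|^{2}$ and passing to the limit $\varepsilon\to 0^{+}$, the Euler--Lagrange equation becomes $u^{*}=-H^{*}\varphi$, where $\varphi$ solves the adjoint system \eqref{sys:ss:Q} with terminal data $\varphi_{_{0}}$. Using the explicit adjoint solution $\varphi(t)=-t^{\alpha-1}K_{\alpha}^{*}(t)\varphi_{_{0}}$ and the computation of $H^{*}$ in \eqref{H*:eq11}, this reduces, in the zone-actuator case, to $u^{*}(t)=\mathcal{B}^{*}(T-t)^{\alpha-1}K_{\alpha}^{*}(T-t)\varphi_{_{0}}=\langle f,\varphi(t)\rangle_{L^{2}(D)}$, and in the pointwise case to $u^{*}(t)=\varphi(b,t)$, exactly the controls exhibited in Theorems~\ref{Thm:norm:EEC} and \ref{theo3}.

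The remaining point is that this adjoint-based control is \emph{admissible} and in fact optimal, not merely critical. Here I would invoke Theorem~\ref{Thm:norm:EEC} (respectively Theorem~\ref{theo3}): under the strategic-actuator hypothesis the operator $M_{0}$ is an isomorphism from $G^{\circ}$ onto $G^{\bot}$, so equation \eqref{main:eq} (resp.\ \eqref{main:eq:point:case}) has a unique solution $\varphi_{_{0}}$, and the corresponding control drives the system into $G$; thus $u^{*}\in U_{ad}$. Optimality then follows because $u^{*}=-H^{*}\varphi$ lies in $\overline{\operatorname{Im}H^{*}}=(\ker H)^{\perp}$, while every other admissible control differs from $u^{*}$ by an element whose image under $H$ keeps $y(T)$ in $G$; the projection-theorem/Pythagoras argument gives $\|u\|_{U}^{2}=\|u^{*}\|_{U}^{2}+\|u-u^{*}\|_{U}^{2}\geq\|u^{*}\|_{U}^{2}$, with equality only for $u=u^{*}$. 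This simultaneously reproves uniqueness and yields the ``minimum energy'' interpretation.

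The main obstacle I anticipate is the rigorous limit $\varepsilon\to 0^{+}$ in the penalization scheme together with the justification of the duality identity $\langle Hu,\varphi_{_{0}}\rangle=\langle u,-H^{*}\varphi\rangle_{U}$ in the \emph{pointwise} case, where $\mathcal{B}u(t)=\delta(x-b)u(t)$ is unbounded and $\varphi(b,t)$ must be given meaning via the regularizing smoothing of $K_{\alpha}^{*}$ (this is why the hypothesis $w_{i}(b)\neq 0$ and the membership $\varphi_{_{0}}\in D(\mathcal{A}^{*})$ are needed, and why one works with the Mittag-Leffler-type representation behind \eqref{eq6}--\eqref{eq7}); the zone-actuator case is comparatively routine once the fractional Green's formula of Lemma~\ref{lema:FGF} and the integration-by-parts formula of Lemma~\ref{lemma2} are in hand.
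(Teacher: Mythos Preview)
Your argument is correct and leads to the same characterization as the paper, but the route is genuinely different from what the authors do. You first dispose of existence and uniqueness by the direct method (strict convexity and coercivity of $\tfrac12\|\cdot\|_{U}^{2}$ on the closed affine set $U_{ad}$), and then identify $u^{*}$ by penalizing the \emph{terminal constraint} via $\mathcal{J}_{\varepsilon}(u)=\tfrac12\|u\|_{U}^{2}+\tfrac{1}{2\varepsilon}\|\mathcal{P}(y_{_{u}}(T))\|^{2}$, supplementing this with a clean projection/Pythagoras argument exploiting that $G$ is a closed subspace. The paper instead follows Lions' classical scheme and penalizes the \emph{state equation}: it minimizes over pairs $(u,z)$ the functional $\mathcal{J}_{\epsilon}(u,z)=\tfrac12\int_{0}^{T}u^{2}\,dt+\tfrac{1}{2\epsilon}\|{}^{C}_{0}D^{\alpha}_{t}z-\mathcal{A}z-\mathcal{B}u\|_{L^{2}(Q)}^{2}$ subject to $z(0)=z_{0}$ and $z(T)\in G$, extracts weak limits from the resulting bounds, defines the adjoint $p_{\epsilon}=-\tfrac{1}{\epsilon}({}^{C}_{0}D^{\alpha}_{t}z_{\epsilon}-\mathcal{A}z_{\epsilon}-\mathcal{B}u_{\epsilon})$, and derives the optimality system by writing the Euler equation and letting $\epsilon\to0$. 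Your approach is more elementary and makes the orthogonality structure (and hence uniqueness) transparent; the paper's approach is heavier but produces the adjoint $p$ as a limit of PDE residuals, which is closer in spirit to how the penalization technique is usually deployed for distributed-parameter control and yields the full optimality system $({}^{C}_{0}D^{\alpha}_{t}z-\mathcal{A}z=\mathcal{B}u,\ {}^{C}_{0}D^{\alpha}_{t}p-\mathcal{A}p=\mathcal{B}\mathcal{B}^{*}p,\ p(T)\in G^{\circ})$ directly.
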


\begin{proof}
Suppose that we have exact enlarged controllability relatively to $G$. 
Then we set $\epsilon > 0$ and we consider the following problem:
\begin{equation}
\label{min:pb:penal}
\mathcal{J}_{\epsilon}(u,z) = \ds\frac{1}{2}\int_{0}^{T}u^{2}(t)dt
+ \frac{1}{2\epsilon}\int_{Q}\left( {}^{C}_{0}D^{\alpha}_{t}z(t)
-\mathcal{A}z(t)-\chi_{_{D}}f(x)u(t)\right) ^{2}dQ,
\end{equation}
where
\begin{equation}
\label{sys:penal}
\begin{cases}
{}^{C}_{0}D^{\alpha}_{t}z(t) - \mathcal{A}z(t) 
- \chi_{_{D}}f(x)u(t)\:\in L^{2}(Q), \\
z(0)=z_{0}\in D(\mathcal{A}),  \\
z_{u}(T)\in G.
\end{cases}
\end{equation}
The set of pairs $(u,z)$ that verify \eqref{sys:penal},
denoted by $W$, is nonempty, and we consider problem
\begin{equation}
\label{min:pb:S}
\begin{cases}
\inf \mathcal{J}_{\epsilon}(u,z),\\
(u,z)\in W.
\end{cases}
\end{equation}
Let $\{u_{\epsilon},z_{\epsilon}\}$ be solution of \eqref{min:pb:S}. Then, 
\begin{equation}
0 < \mathcal{J}_{\epsilon}(u_{\epsilon},z_{\epsilon}) 
= \inf \mathcal{J}_{\epsilon}(u,z) 
< \inf \mathcal{J}_{\epsilon}(u) < \infty,  \quad u\in U_{ad},
\end{equation}
where $\mathcal{J}_{\epsilon}(u) = \ds\frac{1}{2}\int_{0}^{T}u^{2}(t)dt$. 
Tending $\epsilon$ to 0, we conclude that
\begin{eqnarray}
\label{cond:1}
\begin{cases}
\|u_{\epsilon}\|\leq C,\\
\|{}^{C}_{0}D^{\alpha}_{t}z(x,t) - \mathcal{A}z(x,t) 
- \chi_{_{D}}f(x)u(t)\|\leq C\sqrt{\epsilon},
\end{cases}
\end{eqnarray}
where $C$ represents different positive 
constants independent of $\epsilon$. 
It follows from \eqref{cond:1} that
\begin{equation*}
\|{}^{C}_{0}D^{\alpha}_{t}z(x,t) - \mathcal{A}z(x,t)\|
\leq C(1+\sqrt{\epsilon}).
\end{equation*}  
Hence, when $\epsilon\rightarrow 0$, we have that $u_{\epsilon}$ 
is bounded and we can extract a sequence such that
\begin{equation*}
\begin{array}{cc}
u_{\epsilon}\rightharpoonup\tilde{u}
\quad &\mbox{weakly in} \quad U,\\
z_{\epsilon}\rightharpoonup z
\quad &\mbox{weakly in} \quad L^{2}(Q).
\end{array}
\end{equation*}
By the semi-continuity of $\mathcal{J}$, one has
\begin{equation*}
\mathcal{J}(u^{*}) \leq \lim\inf\mathcal{J}_{\epsilon}(u_{\epsilon}) 
\leq \lim\inf\mathcal{J}_{\epsilon}(u_{\epsilon},z_{\epsilon}).
\end{equation*}
Therefore,
\begin{equation*}
\mathcal{J}(u^{*}) = \inf\mathcal{J}(u), \quad u \in U_{ad},
\end{equation*}
and 
\begin{equation*}
u^{*} = \tilde{u}.
\end{equation*}
Define
\begin{equation*}
p_{\epsilon} = -\ds\frac{1}{\epsilon}\left( 
{}^{C}_{0}D^{\alpha}_{t}z_{\epsilon}(x,t)
-\mathcal{A}z_{\epsilon}(x,t)
-\chi_{_{D}}f(x)u_{\epsilon}(t)\right).
\end{equation*}
The Euler equation relatively to problem \eqref{min:pb:S} is given by
\begin{equation*}
\ds\int_{0}^{T}u_{\epsilon}(t)u(t)dt - \int_{0}^{T} \langle p_{\epsilon},
{}^{C}_{0}D^{\alpha}_{t}\eta(t)-\mathcal{A}\eta(t)\rangle dt 
= \int_{0}^{T}\langle p_{\epsilon},f\rangle u(t) dt
\end{equation*}
with $u\in U_{ad}$ and $\eta$ such that
\begin{equation*}
\begin{cases}
{}^{C}_{0}D^{\alpha}_{t}\eta(t) - \mathcal{A}\eta(t)  
= \chi_{_{D}}f(x)u(t) \quad &\mbox{in}\quad Q, \\
\eta(0)=0  \quad &\mbox{on}\quad\Omega, \\
\eta(T)\in G.
\end{cases}
\end{equation*}
We deduce that $p_{\epsilon}$ satisfies
\begin{equation*}
\begin{cases}
{}^{C}_{0}D^{\alpha}_{t}p_{\epsilon}(t) - \mathcal{A}p_{\epsilon}(t)  
= \chi_{_{D}}f(x)\langle p_{\epsilon},f\rangle_{_{L^{2}(D)}} 
\quad &\mbox{in}\quad Q, \\
p_{\epsilon}(0)=0  \quad &\mbox{on}\quad\Omega, 
\end{cases}
\end{equation*}
and $\langle \eta(T),p_{\epsilon}(T)\rangle = 0$ for all $\eta$ 
with $\eta(T)\in G$. Then, $p_{\epsilon}\in G^{\circ}$. 
If we suppose that 
\begin{equation*}
\int_{0}^{T}\langle p_{\epsilon},f\rangle^{^{2}} dt 
\geq C\|p_{\epsilon}(T)\|_{H_{0}^{1}(\Omega)}^{^{2}},
\end{equation*}
then we can switch to the limit when $\epsilon$ tends to 0. 
Moreover, because we have exact enlarge controllability relatively 
to $G$, we obtain the following optimality problem:
\begin{equation*}
\begin{cases}
{}^{C}_{0}D^{\alpha}_{t}z(t) - \mathcal{A}z(t)  
= \chi_{_{D}}f(x)u(t) \quad &\mbox{in}\quad Q, \\
z(0)=z_{_{0}}(x)  \quad &\mbox{on}\quad\Omega, \\
{}^{C}_{0}D^{\alpha}_{t}p(t) - \mathcal{A}p(t) 
= \chi_{_{D}}f(x)\langle p,f\rangle_{_{L^{2}(D)}} 
\quad &\mbox{in}\quad Q, \\
p(0)=0  \quad &\mbox{on}\quad\Omega,\\
p(T)\in G^{\circ}.
\end{cases}
\end{equation*}
Thus, we take $p(T)\in G^{\circ}$ and we introduce the  
solution $\varphi$ of \eqref{sys:ss:Q}. Then, $\psi = z$ 
if $\psi(T)\in G$, which proves that \eqref{main:eq} 
has a unique solution for $\varphi_{_{0}}\in G^{\circ}$.
\end{proof}


\section{\; Examples} 
\label{Sec:examples}

We give two examples, illustrating the obtained results.

\subsection{Example 1: case of a zonal actuator}

Let us consider the following time fractional differential equation 
with a zonal actuator: $Bu(t) = \chi_{_{[a,b]}}u(t)$, $0\leq a\leq b\leq 1$,
\begin{equation}
\label{sys:exp1}
\begin{cases}
{}^{C}_{0}D^{0.4}_{t}z(t) = \Delta z(t)  
+ \chi_{_{[a,b]}}u(t) \quad & \quad [0,1]\times [0,T], \\
z(x,0)=z_{_{0}}(x)  \quad &\quad [0,1], \\
z(0,t) = z(1,t) = 0\quad & \quad [0,T].
\end{cases}
\end{equation}
Here the state space is $L^{2}(0,1)$. Since the operator 
$\mathcal{A} = \Delta = \ds\frac{\partial^{2}\cdot}{\partial x^{2}}$ 
generates a compact, analytic, self-adjoint $C_{_{0}}$-semigroup, 
we have $\mathcal{A} = \Delta = \ds\frac{\partial^{2}\cdot}{\partial x^{2}}$ and 
\begin{equation*}
S(t)z(x) = \ds\sum_{i=1}^{+\infty}e^{\lambda_{i}t}(z,w_{i})_{L^{2}(0,1)}w_{i}(x),
\end{equation*}
where $\lambda_{i} = -i^{2}\pi^{2}$ and $w_{i}(x) = \sqrt{2}\sin(i\pi x)$.
Moreover,
\begin{equation*}
\begin{array}{ll}
K_{_{0.4}}(t)z(x) &= 0.4 \ds\int_{0}^{\infty} 
\theta \phi_{_{0.4}}(\theta)S(t^{0.4}\theta)z d\theta\\
&= 0.4 \ds\int_{0}^{\infty}\theta\phi_{_{0.4}}(\theta)\sum_{i=1}^{\infty}
e^{\lambda_{i}t^{0.4}\theta}(z,w_{i})_{L^{2}(0,1)}w_{i}(x)d\theta\\
&= 0.4 \ds\sum_{i=1}^{\infty}(z,w_{i})_{L^{2}(0,1)}w_{i}(x)\int_{0}^{\infty}
\theta\phi_{_{0.4}}(\theta)e^{\lambda_{i}t^{0.4}\theta}d\theta.
\end{array}
\end{equation*}
It follows from \eqref{gamma:func}, and the Taylor expansion 
of the exponential function, that
\begin{equation*}
\begin{array}{ll}
K_{_{0.4}}(t)z(x) 
&= 0.4\ds\sum_{i=1}^{\infty}(z,w_{i})_{L^{2}(0,1)}w_{i}(x)
\sum_{j=0}^{\infty}\int_{0}^{\infty}\frac{\left( \lambda_{i}
t^{0.4}\right)^{j}}{j!}\theta^{j+1}\phi_{_{0.4}}(\theta)d\theta\\
&= \ds\sum_{i=1}^{\infty} (z,w_{i})_{L^{2}(0,1)}w_{i}(x)
\sum_{j=0}^{\infty} \frac{0.4 (j+1)\left( \lambda_{i}
t^{0.4}\right)^{j}}{\Gamma(1+0.4j+0.4)}\\
&= \ds\sum_{i=1}^{\infty} E_{_{0.4,0.4}}(\lambda_{i}
t^{0.4})(z,w_{i})_{L^{2}(0,1)}w_{i}(x),
\end{array}
\end{equation*}
where $E_{_{p,q}}(z) := \ds\sum_{i=0}\frac{z^{i}}{\Gamma(pi+q)}$, 
$Re(p)>0$, $q,z\in\mathbb{C}$, is the generalized Mittag--Leffler 
function (see \cite{Ref:MitagLeff}). Similarly, we have:
\begin{equation*}
\begin{array}{ll}
\mathcal{R}_{_{0.4}}(t)z(x) &= \ds\int_{0}^{\infty}
\phi_{_{0.4}}(\theta)S(t^{0.4}\theta)zd\theta\\
&= \ds\sum_{i=0}^{\infty}(z,w_{_{i}})_{L^{2}(0,1)}
E_{_{0.4,1}}(\lambda_{i}t^{0.4})w_{_{i}}(x).
\end{array}
\end{equation*}
Since the operator $\Delta$ generates a compact, 
analytic, self-adjoint and continuous semigroup, it follows that
\begin{equation*}
\begin{array}{ll}
\left( H^{*}z\right) (t) 
&= \mathcal{B}^{*}(T-t)^{-0.6}K_{_{0.4}}^{*}(T-t)z(t)\\
&= \mathcal{B}^{*}(T-t)^{-0.6}\ds\sum_{i=1}^{\infty}E_{_{0.4,0.4}}\left( \lambda_{i}(T-t)^{0.4}\right)(z,w_{_{i}})_{L^{2}(0,1)}w_{_{I}}(x) \\
&= (T-t)^{-0.6}\ds\sum_{i=1}^{\infty}E_{_{0.4,0.4}}\left( \lambda_{i}(T-t)^{0.4}\right) (z,w_{_{i}})_{L^{2}(0,1)}\int_{a}^{b}w_{_{i}}(x)dx.\\
&= (T-t)^{-0.6}\ds\sum_{i=1}^{\infty}E_{_{0.4,0.4}}\left( \lambda_{i}(T-t)^{0.4}\right) (z,w_{_{i}})_{L^{2}(0,1)}\frac{\sqrt{2}}{i\pi}\left[ \cos (i\pi x)\right]_{a}^{b}\\
&=  (T-t)^{-0.6}\ds\sum_{i=1}^{\infty}E_{_{0.4,0.4}}\left( \lambda_{i}(T-t)^{0.4}\right) (z,w_{_{i}})_{L^{2}(0,1)}\frac{\sqrt{2}}{i\pi}\sin \frac{i\pi(a+b)}{2} \sin \frac{i\pi(a-b)}{2}.
\end{array}
\end{equation*}
Moreover, by Theorem~\ref{Thm:norm:EEC}, we get that if system 
\eqref{sys:exp1} is enlarged controllable, then 
\begin{equation*}
\begin{array}{ll}
\varphi_{_{0}} \rightarrow \|\varphi_{_{0}}\|_{(L^{2}(0,1))^{*}} 
&= \ds\sum_{i=1}^{\infty} t^{-0.6} 0.4
\int_{0}^{\infty}\theta\phi_{_{0.4}}(\theta) e^{\lambda_{i}(t^{0.4}\theta)}
d\theta\langle f,w_{i}\rangle \langle \varphi_{_{0}},w_{i}\rangle \\
&= t^{-0.6}K_{0.4}(t)\langle \varphi_{_{0}},w_{_{i}}\rangle\\
&= t^{-0.6} \ds\sum_{i=1}^{\infty} E_{_{0.4,0.4}}(\lambda_{i}
t^{0.4})(z,w_{i})_{L^{2}(0,1)}w_{i}(x) \langle \varphi_{_{0}},w_{_{i}}\rangle
\end{array}
\end{equation*}
defines a norm on $(L^{2}(0,1))^{*}$. We find that the control given by
\begin{equation*}
u^{*}(t) = t^{-0.6}\ds\sum_{i=0}^{+\infty}E_{_{0.4,0.4}}(\lambda_{i}
t^{0.4})(z,w_{i})_{L^{2}(0,1)}\langle\varphi_{_{0}},w_{_{i}}\rangle
\end{equation*}
steers system \eqref{sys:exp1} to $L^{2}(0,1)$ at time $T$.


\subsection{Example 2: case of a pointwise actuator}

We now consider the following system with a pointwise control 
$Bu(t) = u(t)\delta(x-b)$, $0<b<1$:
\begin{equation}
\label{sys:exp2}
\begin{cases}
{}^{C}_{0}D^{0.4}_{t}z(t) = \Delta z(t)  
+ u(t)\delta(x-b) \quad & \quad [0,1]\times [0,T], \\
z(x,0)=z_{_{0}}(x)=0  \quad &\quad [0,1], \\
z(0,t) = z(1,t) = 0\quad & \quad [0,T].
\end{cases}
\end{equation}
Let the position of the actuator be $b=1/3$.  
Similarly to the first example, we have:
\begin{equation*}
\begin{array}{ll}
\lambda_{i} = -i^{2}\pi^{2},
\qquad w_{i}(x) = \sqrt{2}\sin(i\pi x), \quad x\in[0,1],\\
S(t)z(x) = \ds\sum_{i=1}^{+\infty} 
e^{\lambda_{i}t}\left(z,w_{i}\right)_{_{L^{2}(0,1)}} w_{i}(x),\\
\mbox{and }\,\, \ K_{_{0.4}}(t)z(x) = \ds
\sum_{i=1}^{+\infty} E_{_{0.4,0.4}}(\lambda_{i}
t^{0.4})\left( z,w_{i}\right)_{_{L^{2}(0,1)}}w_{i}(x). 
\end{array}
\end{equation*}
Moreover, by Theorem~\ref{theo3}, we get that if system 
\eqref{sys:exp2} is enlarged controllable, then
\begin{equation*}
\begin{array}{ll}
\varphi_{_{0}} \rightarrow \|\varphi_{_{0}}\|_{(L^{2}(0,1))^{*}} 
&= \ds\int_{0}^{T}\left\| (T-s)^{-0.6}K_{0.4}^{*}(T-s)\varphi_{_{0}}(b)\right\|^{2}ds \\
&= \ds\int_{0}^{T}\left\| (T-s)^{-0.6}\sum_{i=0}^{+\infty}
E_{_{0.4,0.4}}(\lambda_{i}(T-s)^{0.4})(z,w_{i})_{L^{2}(0,1)}
\varphi_{_{0}}(b)\right\|^{2}ds 
\end{array}
\end{equation*}
defines a norm on $(L^{2}(0,1))^{*}$. We also have that
$M\varphi_{_{0}} = \mathcal{P}(\varphi_{_{1}}(T))$ is 
an affine operator from $(L^{2}(0,1))^{*}$ to $(L^{2}(0,1))$, 
where $\varphi_{_{1}}(T)$ is the solution of the following system:
\begin{equation}
\begin{cases}
{}^{C}_{0}D^{0.4}_{t}\varphi_{_{1}}(t) 
= \Delta \varphi_{_{1}}(t)  
+ (T-t)^{-0.6}K_{_{0.4}}^{*}(T-t)\varphi_{_{0}}(b), \\
\varphi_{_{1}}(0)=0, \\
\varphi_{_{1}}(0,t) = \varphi_{_{1}}(1,t) = 0.
\end{cases}
\end{equation}
Then, by Theorem~\ref{Main:Thm}, we find that the control given by
\begin{equation*}
u^{*}(t) = (T-t)^{-0.6}\ds\sum_{i=0}^{+\infty}
E_{_{0.4,0.4}}(\lambda_{i}(T-s)^{0.4})(z,w_{i})_{L^{2}(0,1)}
\varphi_{_{0}}(b)
\end{equation*}
steers system \eqref{sys:exp2} to $L^{2}(0,1)$ at time $T$, 
where $\varphi_{_{0}}$ is the solution of 
\begin{equation}
\label{exp:eq:point:cas}
M_{0}\varphi_{_{0}} = -\mathcal{P}(\Phi_{_{1}}(T))
\end{equation}
and $\Phi_{_{1}}(t)$ solves
\begin{equation*}
\begin{cases}
\ds {}^{C}_{0}D^{0.4}_{t}\Phi_{_{1}}(t) 
= \Delta\Phi_{_{1}}(t), \quad & t\in [0,T],\\
\Phi_{_{1}}(0)=z_{_{0}}(x)= 0\in D(\mathcal{A}),\\
\Phi_{_{1}}(0,t) = \Phi_{_{1}}(1,t) = 0.
\end{cases}
\end{equation*}
Moreover, $u^{*}$ is the solution of the minimum problem \eqref{min:pb}.


\section{\; Conclusion}
\label{Sec:6}

In this work we considered fractional diffusion equations 
in the sense of Caputo. We studied exact enlarged controllability
for such control systems by using an extended RHUM (Reverse Hilbert Uniqueness Method) 
approach and a penalization technique, covering both zone 
and pointwise actuators. The optimal control of
a minimum energy problem has been characterized explicitly.
The two methods complement each other: using the RHUM approach, 
we computed the control steering the system, for both cases 
of zone and pointwise actuators; by using the penalization method, 
we proved that such control is unique.
We claim that our techniques and results can be adapted
to cover boundary conditions of Dirichlet, Neumann or mixed type,
and to deal with other classes of controls (e.g., distributed controls).
The results here obtained can be extended to more recent notions of derivatives,
e.g., the Atangana--Baleanu operators \cite{AB:16,MR4018654}. 
Another line of research consists to carry out numerical experiments 
illustrating the obtained theoretical results. These and other related 
problems need further investigations, and will be addressed elsewhere.


\section*{\; Acknowledgements}

The present work was supported by Hassan II Academy of Science and Technology (Project N 630/2016), 
Morocco, and by FCT and the Center for Research and Development in Mathematics 
and Applications (CIDMA), project UID/MAT/04106/2019, Portugal.
The authors are very grateful to two anonymous referees, 
for several suggestions and invaluable comments.


\bigskip



\emergencystretch=\hsize

\begin{center}
\rule{6 cm}{0.02 cm}
\end{center}

\end{document}